\documentclass[a4paper,11pt]{amsart}
\usepackage[T1]{fontenc}
\usepackage[english]{babel}
\usepackage[cp1252]{inputenc}
\usepackage{amsthm}
\usepackage{amsmath}
\usepackage{amsfonts}

\usepackage{tikz-cd}
  \usepackage{pgfplots}
\usepackage{microtype}

\usepackage[margin=0.8in]{geometry}

\usepackage{xcolor}
 
\pgfplotsset{compat=1.18}
\usepgfplotslibrary{fillbetween}

\usepackage{amssymb}
\usepackage{hyperref}
\usepackage{color}
\usepackage{caption}
\usepackage{indentfirst}
\usepackage{amssymb}
\usepackage{eufrak}
\usepackage{mathrsfs}
\usepackage{xypic}

\usepackage{booktabs,array,graphicx,longtable}
\usepackage{array,booktabs,tabularx}

\usepackage{booktabs}
\usepackage{listings}

\theoremstyle{plain}   

\usepackage{booktabs}
\usepackage{graphicx}
\usepackage{array}

\newcommand{\R}{\mathbb R}
\newcommand{\C}{\mathbb C}

\newcommand{\Z}{\mathbb Z}

\newcommand{\Log}{\mathrm{Log}\,}

\newcommand{\VolZ}{\operatorname{Vol}_{\mathbb Z}}

\newcommand{\length}{\operatorname{length}}

\newcommand{\di}{\displaystyle}

\newcommand{\T}{(\mathbb C^*)}

\newcommand{\Crit}{\operatorname{Crit}}

\newcommand{\Area}{\operatorname{Area}}

\newcommand{\supp}{\operatorname{supp}}

\newcommand{\Newt}{\operatorname{Newt}}

\newcommand{\card}{\#}
\newcommand{\calA}{\mathscr A}
\newcommand{\calC}{\mathcal C}
\newcommand{\calS}{\mathcal S}

\usepackage{array}
\usepackage{tabularx}

\newcommand{\Sing}{\operatorname{Sing}}

\newcommand\restr[2]{{
  \left.\kern-\nulldelimiterspace 
  #1 
  \right|_{#2} 
  }}

\newtheorem{remark}{Remark}[section]
\newtheorem*{mtheorem*}{Main Theorem}
\newtheorem{theorem}{Theorem}[section]
\newtheorem{definition}{Definition}[section]
\newtheorem{proposition}{Proposition}[section]

\newtheorem{lemma}{Lemma}[section]

\begin{document}

\title{Cusps and nodes  of Amoeba Contours}

\author{Mounir Nisse}

\address{Mounir Nisse\\
Department of Mathematics, Xiamen University Malaysia, Jalan Sunsuria, Bandar Sunsuria, 43900, Sepang, Selangor, Malaysia.
}
\email{mounir.nisse@gmail.com, mounir.nisse@xmu.edu.my}

\date{}

\thanks{This research is supported in part by Xiamen University Malaysia Research Fund (Grant no. XMUMRF/ 2024-C5/IMAT/0013).}

\subjclass[2020]{14Q30, 14T90, 14P10, 58K05}
 
\keywords{Amoeba, amoeba contour, logarithmic map, logarithmic Gauss map,
critical locus, contour singularity,  real algebraic geometry,
semialgebraic projection,  cusps, $s$-nodes, maximally sparse polynomial.}\maketitle

\begin{abstract}
We establish new Newton-polygon bounds for the singularities of amoeba contours of smooth plane curves. Our cusp estimate refines the degree-four bound of Lang--Shapiro--Shustin, reducing its leading coefficient from $8$ to $4$ while retaining the normalized area, boundary lattice points, and directional widths of the Newton polygon. We also obtain a new multiplicity-sensitive bound for transverse $s$-nodes, with the natural decay factor $1/\binom{s}{2}$. The proofs combine logarithmic Gauss maps, normalized fiber products, ramification theory, and saturated off-diagonal intersection schemes, revealing the distinct geometric mechanisms governing cusps and multiple nodes.
\end{abstract}

\section*{Introduction}

Amoebas translate complex algebraic geometry into a real geometric setting while retaining remarkably precise information about Newton polytopes, toric compactifications, and logarithmic asymptotics.  If $f\in\mathbb C[z^{\pm1},w^{\pm1}]$ is a Laurent polynomial and $C_f=\{f=0\}\subset(\mathbb C^*)^2$, its amoeba is the image $\mathcal A_f=\Log (C_f)$ under the logarithmic map $\Log(z,w)=(\log|z|,\log|w|)$.  The contour $\mathcal K_f$ is the set of critical values of $\Log|_{C_f}$.  It contains the boundary of the amoeba, but in general it is substantially richer: it records the folds of the logarithmic projection, the interaction of distinct critical branches, and the singular values at which the local geometry of the projection changes.  Through the logarithmic Gauss map, the contour is simultaneously linked to real algebraic geometry, ramification theory, and the combinatorics of the Newton polygon.  These links make its singularities natural invariants of a plane curve in the algebraic torus.

The quantitative geometry of amoeba contours was advanced decisively by Lang, Shapiro, and Shustin \cite{LangShapiroShustin21}.  They introduced the $\mathbb R$-degree of the contour and obtained general bounds for its intersections with affine lines.  In the plane-curve case, their argument also produced an explicit estimate for the set $\Sigma_H$ of critical points at which the contour parametrization is singular.  Under their smoothness hypothesis, Corollary~2.11 of \cite{LangShapiroShustin21} states that
$$
\#\Sigma_H\leq 2d^3(4d-2)-\VolZ(\Delta),
$$
where $d$ is the projective degree and $\VolZ(\Delta)=2\Area(\Delta)$ is the normalized area of the Newton polygon.  This is an order-$d^4$ estimate with leading coefficient $8$.  Its proof places the logarithmic criticality and ramification conditions in a real polynomial system and applies a total-degree B\'ezout count.  It was the first general estimate of this type and made clear that cusp counting is a central part of the quantitative theory of amoeba contours.

The present work develops a different approach to this problem.  Instead of treating the ramification equations only through their ambient total degrees, we use the intrinsic geometry of the logarithmic Gauss map and the normalized complexified critical curve.  This retains the numerical invariants of the Newton polygon throughout the calculation.  Let $\Delta$ be a two-dimensional lattice polygon, let $n=\VolZ(\Delta)$, let $b=|\partial\Delta\cap\mathbb Z^2|$, and let $\omega_z$ and $\omega_w$ be its two coordinate lattice widths.  We consider the class $\mathscr C_\Delta$ of smooth curves $C_f$ whose logarithmic critical locus is smooth and whose compactification in the toric surface $X_\Delta$ is nondegenerate.  When the ramification points of the contour map are isolated, we denote by $\kappa(f)$ the number of distinct cusp values of $\mathcal K_f$.

The first principal result of this work is the Newton-polygon estimate $\kappa(f)\leq 2n(2n-b)+4n(\omega_z+\omega_w)$.  This formula is sensitive not only to the area of $\Delta$, but also to its boundary and its directional widths.  It therefore distinguishes polygons having the same normalized area but different shapes, a distinction that disappears from a total-degree calculation.  If neither logarithmic coordinate function is constant on any irreducible component of the normalized complexified critical curve, the estimate improves to $\kappa(f)\leq 2n(2n-b)+4n\min\{\omega_z,\omega_w\}$.  The bound applies to isolated higher-order ramification values as well, since each such value consumes at least one zero of the corresponding logarithmic differential.

For a polynomial of projective degree $d$, the containment $\Delta\subset d\Delta_2$ converts the polygonal estimate into the universal inequality $\kappa(f)\leq4d^4+8d^3-6d^2$.  In particular, $\kappa(f)\leq(13/2)d^4$ for every $d$, $\kappa(f)\leq6d^4$ for $d\geq3$, and $\kappa(f)\leq(4+O(d^{-1}))d^4$ asymptotically.  When $\Delta=d\Delta_2$, the exact substitution $n=d^2$, $b=3d$, and $\omega_z=\omega_w=d$ gives the sharper formula $\kappa(f)\leq4d^4+2d^3$.  Under the additional nonconstancy hypothesis, the same full-triangle case yields $\kappa(f)\leq4d^4-2d^3$.

This gives a direct and substantial comparison with Corollary~2.11 of Lang--Shapiro--Shustin.  Their estimate has leading term $8d^4$, whereas the present estimate has leading term $4d^4$.  Thus the leading coefficient is reduced by one half.  For the full degree triangle, their bound reads $8d^4-4d^3-d^2$, while our polygonal calculation gives $4d^4+2d^3$, or $4d^4-2d^3$ under the componentwise nonconstancy condition.  The improvement does not arise from a different estimate for the same total-degree system.  It comes from replacing that ambient calculation by a divisor count on the normalization of the logarithmic-Gauss fiber product.  Riemann--Hurwitz controls the genus contribution, while the pole divisors of the meromorphic functions $Z=z\zeta$ and $W=w\eta$ are governed by the coordinate widths of $\Delta$.  Cusps are then detected as common zeros of the logarithmic differentials $d\log Z$ and $d\log W$.  This geometric reformulation explains both the smaller leading coefficient and the appearance of the finer Newton-polygon data.

The cusp estimate addresses ramification of a single critical branch.  A second and independent source of contour singularities occurs when distinct regular critical branches have the same logarithmic image.  This work introduces a systematic bound for such coincidences and, more generally, for transverse multiple points.  An $s$-node is a contour value having exactly $s$ distinct critical lifts at which the contour map is immersive and whose $s$ tangent lines are pairwise distinct.  If $N_s(f)$ denotes the number of isolated $s$-nodes, our second principal result is
$$
N_s(f)\leq\left\lfloor\frac{8n^2\omega_z\omega_w}{s(s-1)}\right\rfloor.
$$
Since $\omega_z,\omega_w\leq n$, the area-only consequence is $N_s(f)\leq\lfloor8n^4/(s(s-1))\rfloor$.  Equivalently, it is $N_s(f)\leq\lfloor128\Area(\Delta)^4/(s(s-1))\rfloor$.  If $f$ has projective degree $d$, then $n\leq d^2$ and $\omega_z,\omega_w\leq d$.  This gives $N_s(f)\leq\lfloor8d^6/(s(s-1))\rfloor$.

The factor $1/\binom{s}{2}$ is an essential feature of the result rather than a formal adjustment.  A transverse intersection of $s$ branches determines exactly $\binom{s}{2}$ unordered pairs of distinct critical lifts with the same logarithmic value.  The normalized complexified critical curve carries the meromorphic map $(Z,W)$, and isolated off-diagonal coincidences are detected on its self-product by the two equations $Z(q_1)=Z(q_2)$ and $W(q_1)=W(q_2)$.  Their residual intersection, after removing the diagonal and all positive-dimensional overlap components, is bounded by the product of the degrees of $Z$ and $W$.  The estimates $\deg Z\leq2n\omega_z$ and $\deg W\leq2n\omega_w$ then give at most $4n^2\omega_z\omega_w$ unordered off-diagonal pairs.  Dividing this budget by $\binom{s}{2}$ produces the stated bound.  In particular, the theorem becomes stronger as the multiplicity $s$ increases and records a geometric constraint that cannot be read from a bound for the total number of singular values alone.

This separation between ramification singularities and coincidence singularities is one of the main structural contributions of the paper.  Cusps are governed by zeros of logarithmic differentials on a normalized fiber product, whereas $s$-nodes are governed by a saturated two-lift self-intersection problem.  The diagonal saturation is indispensable: without it, every point paired with itself forms an automatic positive-dimensional component and obscures the off-diagonal solutions.  Saturation by the diagonal, followed when necessary by the removal of toric-boundary and positive-dimensional coincidence components, isolates precisely the residual scheme relevant to distinct critical lifts.  The resulting method is compatible with exact elimination and real-root isolation, so the theoretical bounds also provide a framework for certified computations in explicit examples.

The two main estimates therefore refine the quantitative study initiated in \cite{LangShapiroShustin21} in complementary directions.  The cusp theorem improves the leading degree coefficient, retains the geometry of the Newton polygon, and identifies the divisor responsible for ramification.  The $s$-node theorem supplies a new multiplicity-sensitive estimate for transverse intersections of several contour branches.  Together they show that the singular geometry of an amoeba contour is controlled not by one undifferentiated algebraic system, but by two distinct mechanisms with different intersection theories.  This point of view connects the local classification of contour singularities with global toric invariants and opens a route toward sharper bounds for restricted polygonal families, exact counts for particular curves, and asymptotic improvements obtained from the real part of the logarithmic critical curve.

\medskip


Let $\Delta\subset\R^2$ be a fixed two-dimensional lattice polygon and
put $A_\Delta=\Delta\cap\Z^2=\{\alpha_1,\ldots,\alpha_s\}$, where
$\alpha_j=(a_j,b_j)$.  The clarification in the question means that the
coefficient space is
$$
\calS_\Delta=
\left\{f_c(z,w)=\sum_{\alpha=(a,b)\in A_\Delta}
c_\alpha z^aw^b:
c_\alpha\neq0\ \text{for every }\alpha\in\operatorname{Vert}(\Delta)
\right\}.
$$
Equivalently,
$\operatorname{Vert}(\Delta)\subseteq\supp(f)\subseteq
\Delta\cap\Z^2$.  If $r=\#\operatorname{Vert}(\Delta)$ and
$s=\#(\Delta\cap\Z^2)$, then
$\calS_\Delta\simeq(\C^\ast)^r\times\C^{s-r}$.  The coefficients at the
vertices are mandatory and nonzero, whereas the coefficients at
nonvertex lattice points are optional.  In particular, every
$f\in\calS_\Delta$ has actual Newton polygon
$\Delta_f=\operatorname{conv}(\supp(f))=\Delta$.
For $V_f=\{f=0\}\subset\T$, set $\Log(z,w)=(\log|z|,\log|w|)$,
$\calA_f=\Log(V_f)$, and, when $V_f$ is smooth,
$\calC(\calA_f)=\Log(\Crit(\Log|_{V_f}))$.

%
%

Set $f_z=\partial f/\partial z$ and $f_w=\partial f/\partial w$.  On the
smooth curve $V_f$, the logarithmic Gauss map is
$\gamma_f:V_f\longrightarrow\mathbb P^1_\C$,
$\gamma_f(z,w)=[zf_z(z,w):wf_w(z,w)]$.  A standard calculation gives
$\Sigma_f=\gamma_f^{-1}(\mathbb P^1_\R)$.  Indeed, a tangent vector
$(\dot z,\dot w)$ to $V_f$ satisfies
$f_z\dot z+f_w\dot w=0$, while the differential of $\Log$ sends it to
$(\operatorname{Re}(\dot z/z),\operatorname{Re}(\dot w/w))$.  The real
rank is smaller than two exactly when $zf_z/(wf_w)$ is real, with the
usual interpretation at $wf_w=0$.


All numerical calculations were performed in Python, principally with
NumPy, SciPy, SymPy, and Matplotlib. The numerical data thus obtained were
subsequently used to prepare the figures in PGFPlots and \LaTeX{}.

\medskip

\noindent{\it Acknowledgements.}
The author gratefully thanks Boris Shapiro for sharing the paper written jointly
with Lionel Lang and Eugenii Shustin. This paper, together with the questions
formulated in it, provided the principal motivation for the present work.
 
\section{An Upper Bound for the Isolated Singularities of an Amoeba Contour}

Define
$x_{\min}=\min\{a:(a,b)\in\Delta\}$ and
$y_{\min}=\min\{b:(a,b)\in\Delta\}$, and put
$d_\Delta=\max\{(a-x_{\min})+(b-y_{\min}):(a,b)\in\Delta\}$.  Multiplication
of $f$ by the invertible Laurent monomial $z^{-x_{\min}}w^{-y_{\min}}$ does
not change $V_f\subset\T$, its logarithmic critical locus, or its contour.
After that multiplication, $f$ is an ordinary polynomial of total degree at
most $d_\Delta$.  The number $d_\Delta$ therefore gives a uniform degree
enclosure depending only on $\Delta$.

\subsection*{Complexification of the logarithmic critical locus}

Write $f(z,w)=\sum c_{ab}z^aw^b$ after the preceding monomial translation
and introduce independent variables $\zeta,\eta$.  Define
$f^\dagger(\zeta,\eta)=\sum\overline{c_{ab}}\zeta^a\eta^b$ and set
$P=zf_z$, $Q=wf_w$, $P^\dagger=\zeta f^\dagger_\zeta$, and
$Q^\dagger=\eta f^\dagger_\eta$.  The polynomial
$K=PQ^\dagger-QP^\dagger$ has total degree at most $2d_\Delta$ in
$(z,w,\zeta,\eta)$.

The complexified logarithmic critical curve is
$\Gamma_f=V(f,f^\dagger,K)\subset(\C^*)^4$.  On the antiholomorphic real
form $\zeta=\bar z$ and $\eta=\bar w$, the equation $K=0$ says that
$[zf_z:wf_w]\in\mathbb P^1_\R$.  This is precisely the criticality
condition for $\Log|_{V_f}$ when $V_f$ is smooth.  Indeed, if
$(\dot z,\dot w)$ is tangent to $V_f$, then
$f_z\dot z+f_w\dot w=0$, whereas
$d\Log(\dot z,\dot w)=(\operatorname{Re}(\dot z/z),
\operatorname{Re}(\dot w/w))$.  The real rank drops exactly when the
homogeneous ratio $[zf_z:wf_w]$ is real.

On $\Gamma_f$ consider the algebraic critical-value map
$\mu(z,w,\zeta,\eta)=(R,S)=(z\zeta,w\eta)$.  On the real form one has
$R=|z|^2$ and $S=|w|^2$, and the logarithmic contour map is obtained by
composing $\mu$ with $(R,S)\mapsto(\frac12\log R,\frac12\log S)$.  The
latter is a real-analytic diffeomorphism of $\R_{>0}^2$ onto $\R^2$ and
therefore does not change local singularity types.

\subsection*{The two algebraic sources of isolated singular values}

At a smooth complete-intersection point $p$ of $\Gamma_f$, i.e. 
$p$ satisfies
\(
 f(p)=f^\dagger(p)=K(p)=0,
 \, 
 \operatorname{rank}\mathcal J_{\Gamma_f}(p)=3.
\)
Thus, 
 its tangent line is
the common kernel of $df$, $df^\dagger$, and $dK$ (see Appendix A).  Define
$$
J_R=\det\frac{\partial(f,f^\dagger,K,R)}
{\partial(z,w,\zeta,\eta)},\qquad
J_S=\det\frac{\partial(f,f^\dagger,K,S)}
{\partial(z,w,\zeta,\eta)}.
$$
The restriction of $d\mu$ to the tangent line vanishes exactly when
$J_R=J_S=0$.  After removal of components introduced by a singular
Jacobian presentation and components on the toric boundary, these equations
define the ramification scheme $Z_{\mathrm{ram}}$.

For double values, take two copies of $\Gamma_f$ with coordinates
$p_i=(z_i,w_i,\zeta_i,\eta_i)$, $i=1,2$, and impose
$R_1-R_2=z_1\zeta_1-z_2\zeta_2=0$ and
$S_1-S_2=w_1\eta_1-w_2\eta_2=0$.  The resulting eight equations are
$$
f_1=f_1^\dagger=K_1=f_2=f_2^\dagger=K_2=R_1-R_2=S_1-S_2=0.
$$
The diagonal ideal is
$I_{\mathrm{diag}}=(z_1-z_2,w_1-w_2,\zeta_1-\zeta_2,
\eta_1-\eta_2)$.  Saturating by $I_{\mathrm{diag}}$ removes the automatic
solutions $p_1=p_2$, and saturation by the irrelevant ideals of a toric
compactification removes boundary components.  The remaining scheme is the
ordered off-diagonal double-point scheme $Z_{\mathrm{off}}$.

\begin{lemma}
Assume that $Z_{\mathrm{ram}}$ and $Z_{\mathrm{off}}$ are
zero-dimensional and that every isolated singular value of
$\calC(\calA_f)$ is produced either by ramification of $\mu$ or by two
distinct critical preimages.  Then
$$
\card\Sing_{\mathrm{isol}}\calC(\calA_f)
\leq\length(Z_{\mathrm{ram}})+\frac12\length(Z_{\mathrm{off}}).
$$
\end{lemma}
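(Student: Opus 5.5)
The plan is to build an injection (or at least a finite-to-one assignment with controlled fibers) from $\Sing_{\mathrm{isol}}\calC(\calA_f)$ into the union of the supports of $Z_{\mathrm{ram}}$ and $Z_{\mathrm{off}}$, and then to compare cardinalities of supports with lengths.

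We split $\Sing_{\mathrm{isol}}\calC(\calA_f)=\Sigma_1\cup\Sigma_2$. Here $\Sigma_1$ consists of values having at least one critical lift at which $\mu$ (equivalently the contour map) ramifies. The set $\Sigma_2$ consists of the remaining values, which by hypothesis have at least two distinct critical preimages.

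For $v\in\Sigma_1$, we choose such a lift $p$. On the real form $\zeta=\bar z$, $\eta=\bar w$, the point $p$ lies on $\Gamma_f$. Using the smoothness of the logarithmic critical locus (so that the Jacobian presentation is nonsingular at $p$, as in Appendix A), we check that the vanishing of $d\mu$ on the tangent line is exactly $J_R(p)=J_S(p)=0$. Since $p\in\T^4$ is not on the toric boundary, and not on a component removed for Jacobian degeneracy, $p$ lies in the support of $Z_{\mathrm{ram}}$. Distinct values have distinct lifts, because $\mu(p)$ determines $v$ through the diffeomorphism $(R,S)\mapsto(\frac12\log R,\frac12\log S)$. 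This gives an injection $\Sigma_1\hookrightarrow\operatorname{supp}Z_{\mathrm{ram}}$, and therefore $\#\Sigma_1\le\length(Z_{\mathrm{ram}})$, since the length of a zero-dimensional scheme is at least the number of its points.

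For $v\in\Sigma_2$, we choose two distinct lifts $p_1\ne p_2$. Then both $(p_1,p_2)$ and $(p_2,p_1)$ satisfy the eight equations and lie off the diagonal. We must check that they survive the saturation by $I_{\mathrm{diag}}$ and by the irrelevant ideals. The key fact is that saturation only removes primary components supported on the diagonal or the boundary. Since $Z_{\mathrm{off}}$ is zero-dimensional, the point $(p_1,p_2)$ is an isolated point of the original solution set away from $V(I_{\mathrm{diag}})$. Its local ring is therefore unchanged by saturation, so it is a point of $Z_{\mathrm{off}}$. The involution $\tau(p_1,p_2)=(p_2,p_1)$ preserves the defining ideal, hence acts on $Z_{\mathrm{off}}$, and it has no fixed points there because $p_1\neq p_2$. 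The map $v\mapsto\{(p_1,p_2),(p_2,p_1)\}$ thus sends distinct values to disjoint $\tau$-orbits of size two, since again $\mu$ separates values. Hence $2\#\Sigma_2\le\#\operatorname{supp}Z_{\mathrm{off}}\le\length(Z_{\mathrm{off}})$. Adding the two estimates gives the lemma.

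The main obstacle is the membership claims rather than the counting. For $Z_{\mathrm{ram}}$ it is the claim that a genuine ramification point is not discarded with the \emph{singular Jacobian presentation} components. For $Z_{\mathrm{off}}$ it is that the saturations do not kill $(p_1,p_2)$. For the first, I would use the smoothness hypothesis on the logarithmic critical locus to guarantee rank $3$ at real points, so that such points never lie on removed components. For the second, I would argue locally: saturation commutes with localization at a prime not containing the saturating ideal. One further delicacy: if a value has a ramified lift and also several lifts, we count it only once, in $\Sigma_1$, so the bound is not affected.
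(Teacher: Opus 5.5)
This is the paper's proof. You split the isolated singular values into those with a ramified lift and the rest, inject the former into the geometric points of $Z_{\mathrm{ram}}$, and send each of the latter to the two ordered points $(p_1,p_2),(p_2,p_1)$ of $Z_{\mathrm{off}}$ (disjoint for distinct values because $\mu$ separates them), then bound numbers of points by lengths. The only step where you go beyond the paper, deriving $Z_{\mathrm{ram}}$-membership from ``smoothness of the logarithmic critical locus,'' should be dropped: that is not a hypothesis of this lemma, and real smoothness of $S_f$ would not force the complex Jacobian of $(f,f^\dagger,K)$ to have rank $3$ anyway (for instance, a non-real branch of $\Gamma_f$ passing through the point breaks it). Instead, as the paper does, read ``produced by ramification of $\mu$'' as supplying a geometric point of $Z_{\mathrm{ram}}$ over the value; your localization argument for $Z_{\mathrm{off}}$ is fine.
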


\begin{proof}
Every singular value having a ramified lift has at least one geometric point
of $Z_{\mathrm{ram}}$ above it.  Distinct values require distinct source
points, and the number of geometric points of a zero-dimensional scheme is
at most its scheme-theoretic length.  Hence the number of such values is at
most $\length(Z_{\mathrm{ram}})$.
Every remaining isolated singular value has two distinct lifts $p$ and $q$.
It gives the two distinct ordered points $(p,q)$ and $(q,p)$ of
$Z_{\mathrm{off}}$.  Ordered pairs lying over different values are distinct.
Consequently, the number of these remaining singular values is at most
$\frac12\length(Z_{\mathrm{off}})$.  Adding the two estimates proves the
claim.
\end{proof}

\subsection*{Total-degree estimates}

Put $d=d_\Delta$.  The six critical-curve equations occurring in the
two-lift system have degrees $d,d,2d,d,d,2d$, and the two equal-value
equations have degree two.  If the saturated off-diagonal scheme is
zero-dimensional, affine B\'ezout's theorem, or equivalently the
projective B\'ezout estimate after homogenization, gives
$$
\length(Z_{\mathrm{off}})
\leq d\cdot d\cdot2d\cdot d\cdot d\cdot2d\cdot2\cdot2
=16d^6.
$$
Saturation can only discard components or local contributions; it cannot
increase the length of the isolated torus solutions.  Therefore the
off-diagonal contribution to the number of image values is at most $8d^6$.
For the ramification contribution, note that the rows in the determinant
$J_R$ have degrees at most $d-1,d-1,2d-1,1$.  Every determinant term is a
product of one entry from each row, so $\deg J_R\leq4d-2$.  The same estimate
holds for $J_S$.  Choose a generic pair $(\lambda,\nu)\in\C^2$ and put
$H=\lambda J_R+\nu J_S$.  On each irreducible component of the reduced curve
$\Gamma_f$, the assumption that the ramification scheme is
zero-dimensional guarantees that $J_R$ and $J_S$ do not both vanish
identically.  A generic choice of $(\lambda,\nu)$ therefore makes $H$
nonzero on every component.  The intersection
$V(f,f^\dagger,K,H)$ is then zero-dimensional along the relevant torus
curve, contains $Z_{\mathrm{ram}}$, and has B\'ezout length at most
$
d\cdot d\cdot2d\cdot(4d-2)=2d^3(4d-2)=8d^4-4d^3.
$
It follows that
$\length(Z_{\mathrm{ram}})\leq8d^4-4d^3$.
 
\begin{theorem}
Let $\Delta$ be a two-dimensional lattice polygon and let
$f\in\calS_\Delta$.  Assume that $V_f$ is smooth in $\T$, that
$\Gamma_f$ is reduced, and that the ramification scheme and the saturated
off-diagonal double-point scheme are zero-dimensional after removal of the
diagonal, presentation components, and toric-boundary components.  Assume
also that every isolated singular value of $\calC(\calA_f)$ is produced
either by ramification of the critical-value map or by the coincidence of
two distinct critical preimages.  Then, with
$d_\Delta=\max_{(a,b)\in\Delta}((a-x_{\min})+(b-y_{\min}))$, one has
$$
\card\Sing_{\mathrm{isol}}\calC(\calA_f)
\leq8d_\Delta^6+8d_\Delta^4-4d_\Delta^3
<8d_\Delta^6+8d_\Delta^4.
$$
\end{theorem}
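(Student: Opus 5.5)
The plan is to combine the preceding Lemma with the two total-degree estimates from the subsection just before the statement. First I check that the hypotheses of the Theorem place us in the setting of the Lemma. The smoothness of $V_f$ makes the criticality description $\Sigma_f=\gamma_f^{-1}(\mathbb P^1_\R)$ valid, so the real form of $\Gamma_f$ lifts the logarithmic critical locus. The contour $\calC(\calA_f)$ is then the image of that real form under $\mu$, composed with the diffeomorphism $(R,S)\mapsto(\frac12\log R,\frac12\log S)$. Since this diffeomorphism preserves local singularity types and is injective, isolated singular values of $\calC(\calA_f)$ correspond bijectively to isolated singular values of $\mu$ restricted to the real critical curve. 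The Theorem explicitly assumes that $Z_{\mathrm{ram}}$ and $Z_{\mathrm{off}}$ are zero-dimensional, and that every isolated singular value comes from ramification or from two distinct critical preimages. The Lemma therefore applies and gives
$$
\card\Sing_{\mathrm{isol}}\calC(\calA_f)\leq\length(Z_{\mathrm{ram}})+\tfrac12\length(Z_{\mathrm{off}}).
$$

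Next I insert the B\'ezout bounds with $d=d_\Delta$. The monomial translation by $z^{-x_{\min}}w^{-y_{\min}}$ changes neither $V_f$ nor its contour, and it makes $f$ a polynomial of degree at most $d$. Consequently $f^\dagger$ also has degree at most $d$, $K$ has degree at most $2d$, and $\deg J_R,\deg J_S\leq 4d-2$.

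For the off-diagonal part, the eight equations have degrees $d,d,2d,d,d,2d,2,2$. Saturation only removes components, so it cannot increase the isolated length. This gives $\length(Z_{\mathrm{off}})\leq16d^6$, and hence $\frac12\length(Z_{\mathrm{off}})\leq 8d^6$.

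For the ramification part, I use the reducedness of $\Gamma_f$ together with the zero-dimensionality of $Z_{\mathrm{ram}}$. Together they ensure that $J_R$ and $J_S$ do not vanish simultaneously along any component. I then choose a generic combination $H=\lambda J_R+\nu J_S$ that is nonzero on each component. The scheme $V(f,f^\dagger,K,H)$ contains $Z_{\mathrm{ram}}$ and has isolated length at most $d\cdot d\cdot 2d\cdot(4d-2)=8d^4-4d^3$. Adding the two bounds gives $8d^6+8d^4-4d^3$. The strict inequality holds because $d_\Delta\geq1$ for a two-dimensional polygon, so $4d_\Delta^3>0$.

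The step that needs the most care is the passage from $Z_{\mathrm{ram}}\subset V(f,f^\dagger,K,H)$ to a length inequality. $Z_{\mathrm{ram}}$ is defined only after presentation and boundary components are removed, so I must ensure that its points are isolated points of the four-equation system rather than points lying on a positive-dimensional excess component. My approach is to note that $H$ cuts each irreducible component of the one-dimensional curve $\Gamma_f$ properly. The torus part of $V(f,f^\dagger,K,H)$ lying on the relevant components is then zero-dimensional. The refined affine B\'ezout inequality bounds the sum of the local lengths at isolated points by the product of the degrees, even when there are excess components elsewhere, for instance at infinity or on components of $V(f,f^\dagger,K)$ that are not part of $\Gamma_f$. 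Since $Z_{\mathrm{ram}}$ consists of such isolated points and its local lengths are dominated by those of the larger system, the inequality follows. The analogous refined B\'ezout statement justifies the off-diagonal bound after saturation.
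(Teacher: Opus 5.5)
Your proposal is correct and follows essentially the same route as the paper. You apply the counting lemma, then insert the two total-degree B\'ezout estimates: $\length(Z_{\mathrm{off}})\leq16d_\Delta^6$, and $\length(Z_{\mathrm{ram}})\leq d_\Delta\cdot d_\Delta\cdot 2d_\Delta\cdot(4d_\Delta-2)$ via the generic combination $H=\lambda J_R+\nu J_S$. Your extra discussion of the refined B\'ezout inequality for isolated points, and of why the local lengths of $Z_{\mathrm{ram}}$ are dominated by those of $V(f,f^\dagger,K,H)$, just makes explicit a step the paper asserts without comment.
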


\begin{proof}
The counting lemma gives
$\card\Sing_{\mathrm{isol}}\calC(\calA_f)
\leq\length(Z_{\mathrm{ram}})+\frac12\length(Z_{\mathrm{off}})$.
The degree calculations give
$\length(Z_{\mathrm{ram}})\leq8d_\Delta^4-4d_\Delta^3$ and
$\length(Z_{\mathrm{off}})\leq16d_\Delta^6$.  Substitution proves the first
inequality.  Since $d_\Delta>0$, discarding the negative term
$-4d_\Delta^3$ gives the strict second inequality.
\end{proof}

 
\begin{remark}
Let $\Delta\subset\R^2$ be a fixed two-dimensional lattice polygon and
let
$\calS_\Delta=\{f\in\C[z^{\pm1},w^{\pm1}]:\Newt(f)=\Delta\}$.  Thus every
vertex of $\Delta$ belongs to $\supp(f)$, while lattice points of
$\Delta$ that are not vertices may or may not occur.  Put
$r=\#\operatorname{Vert}(\Delta)$ and
$M_\Delta=\#(\Delta\cap\Z^2)-r$.  For $f\in\calS_\Delta$, define
$$
q(f)=\#\bigl(\supp(f)\setminus\operatorname{Vert}(\Delta)\bigr).
$$
One cannot assert that adding a nonvertex monomial always creates a
singularity, always destroys a singularity, or always leaves the number
unchanged.  The positions of the exponents, the moduli and phases of the
coefficients, the real solutions of the logarithmic critical equations,
and the way distinct critical branches project under $\Log$ all matter.
The locations of the nonvertex exponents are also essential.  An exponent
in the interior of $\Delta$, an exponent in the relative interior of an
edge, and an exponent close to a vertex enter the logarithmic derivative
equations in different ways.
\end{remark}


 \section*{Example with Triangle Newton polygon}

Consider the lattice triangle
$\Delta=\operatorname{conv}\{(0,0),(4,0),(0,1)\}$.  Its vertex set is
$\operatorname{Vert}(\Delta)=\{(0,0),(4,0),(0,1)\}$.  The polynomial
$g(z,w)=w-1-z^4$ is maximally sparse because
$\supp(g)=\operatorname{Vert}(\Delta)$, and its Newton polygon is exactly
$\Delta$.  We shall prove that the contour of its amoeba has no singular
point.

Now add the two nonvertex monomials $z$ and $z^3$ and consider
$f(z,w)=w-1-z+z^3-z^4$.  Its support is
$\{(0,0),(1,0),(3,0),(4,0),(0,1)\}$, so $\Newt(f)=\Delta$, but $f$ is no
longer maximally sparse.  We shall prove that the contour of its amoeba has
an ordinary transverse node at $(0,\log 2)$.

Thus $g$ and $f$ have the same Newton polygon, the contour of the maximally
sparse polynomial $g$ is nonsingular, and adding nonvertex monomials produces
a singular contour for $f$.

\subsection*{The critical locus of a graph curve}

Let $p$ be a one-variable polynomial and consider the smooth graph curve
$C_p=\{w-p(z)=0\}\subset\T$.  It is smooth because the derivative of
$w-p(z)$ with respect to $w$ is identically one.  The logarithmic Gauss map
of $C_p$ is
$\gamma_p(z)=[-zp'(z):p(z)]$.  On the affine chart where $p(z)\neq0$, define
$h_p(z)=-zp'(z)/p(z)$.  The logarithmic critical locus is
$\gamma_p^{-1}(\mathbb P^1_\R)$, or equivalently
$h_p^{-1}(\R)$ on this affine chart.
Suppose that $p$ has real coefficients, that $z_0\in\R^*$,
$p(z_0)\neq0$, and $h_p'(z_0)\neq0$.  Since $h_p$ is holomorphic and has
real coefficients, it maps the real axis into the real axis.  The condition
$h_p'(z_0)\neq0$ makes it biholomorphic in a neighborhood of $z_0$.  The
inverse image of the real axis is therefore exactly the real axis near
$z_0$.  Consequently, the local logarithmic critical locus is parametrized
by the real variable $t$ through $z=t$ and $w=p(t)$.
On an interval on which $t\neq0$ and $p(t)\neq0$, its critical-value map is
$r_p(t)=(\log|t|,\log|p(t)|)$.  Differentiation gives
$r_p'(t)=(1/t,p'(t)/p(t))$.  In particular, this vector never vanishes,
because its first coordinate is $1/t$.  A singularity of the image can still
arise if two distinct critical parameters have the same logarithmic image
and their image branches cross.

\subsection*{Nonsingularity of the maximally sparse contour}

For $g(z,w)=w-1-z^4$, put $U=z^4$ and $W=-w$.  The curve becomes
$1+U+W=0$.  On logarithmic coordinates, the monomial map is the invertible
real-linear transformation
$(\log|z|,\log|w|)\mapsto(4\log|z|,\log|w|)$.  Hence it preserves whether a
contour germ is smooth, whether two contour values coincide, and whether two
branches meet transversely.
It is therefore enough to study the line $L=\{1+U+W=0\}$.  Its logarithmic
Gauss map is $[U:W]$.  The criticality condition is
$[U:W]\in\mathbb P^1_\R$.  Since $W=-1-U$, this condition forces $U$ to be
real.  Indeed, writing $U=a+ib$ and $W=-1-a-ib$, the condition that $U/W$
be real is $\operatorname{Im}(U\overline W)=0$, and a direct computation
gives $\operatorname{Im}(U\overline W)=-b$.  Thus $b=0$.
The contour of $L$ is consequently parametrized by
$\rho(t)=(\log|t|,\log|1+t|)$, where
$t\in\R\setminus\{-1,0\}$.  Its derivative is
$\rho'(t)=(1/t,1/(1+t))$, which never vanishes.  It remains to prove that
the parametrization has no double value.  Suppose that $\rho(t)=\rho(s)$.
Then $|t|=|s|$, so $s=t$ or $s=-t$.  In the second case,
$|1+t|=|1-t|$.  Squaring both sides gives
$(1+t)^2=(1-t)^2$, hence $4t=0$.  This would imply $t=0$, which is excluded.
Therefore $s=t$, and $\rho$ is injective.
The contour of $L$ is an injectively immersed real curve, so it has no finite
singular point.  The invertible linear change of logarithmic coordinates
then shows that $\calC(\calA_g)$ is nonsingular.

\subsection*{Two distinct critical lifts for the enlarged support}

Write $f(z,w)=w-p(z)$ with
$p(z)=1+z-z^3+z^4$.  The two points
$q_+=(1,2)$ and $q_-=(-1,2)$ belong to $V_f$, because
$p(1)=p(-1)=2$.  They are distinct points of $(\C^*)^2$, but
$\Log(q_+)=\Log(q_-)=(0,\log2)$.

Both points lie in the logarithmic critical locus.  This follows immediately
from the fact that $p$ has real coefficients and both parameters are real.
It can also be checked directly from the logarithmic Gauss map:
$p'(z)=1-3z^2+4z^3$, so $p'(1)=2$ and $p'(-1)=-6$.  Hence
$\gamma_p(1)=[-2:2]=[-1:1]$ and
$\gamma_p(-1)=[-6:2]=[-3:1]$, both of which lie in
$\mathbb P^1_\R$.
We must verify that these are nondegenerate critical lifts.  The affine
Gauss coordinate is $h_p(z)=-zp'(z)/p(z)$, and
$$
h_p'(z)=-\frac{p'(z)+zp''(z)}{p(z)}
+\frac{z(p'(z))^2}{p(z)^2}.
$$
Since $p''(z)=-6z+12z^2$, one has $p''(1)=6$ and $p''(-1)=18$.  Substitution
gives $h_p'(1)=-3$ and $h_p'(-1)=3$.  In particular, neither derivative
vanishes.  The local critical locus near each $q_\pm$ is therefore precisely
the real $z$-axis, and each of the two corresponding local contour branches
is a regular immersed branch.

\subsection*{Classification of the singularity}

For real $t$ near $1$ or $-1$, the contour parametrization is
$r(t)=(\log|t|,\log|p(t)|)$, and
$r'(t)=(1/t,p'(t)/p(t))$.  At the two critical lifts this gives
$r'(1)=(1,1)$ and $r'(-1)=(-1,-3)$.  Their tangent determinant is
$$
\det\begin{pmatrix}1&1\\-1&-3\end{pmatrix}=-2\neq0.
$$
Thus the two regular local branches have distinct tangent lines.  They meet
at the common image $(0,\log2)$ and cross transversely.  By definition, the
contour germ at this value is an ordinary real node.  In particular, it is
not a cusp: neither branch is ramified, since both tangent vectors are
nonzero.  It is not a tacnode, because the tangent determinant is nonzero.

We have proved the following statement.

\begin{proposition}
Let $\Delta=\operatorname{conv}\{(0,0),(4,0),(0,1)\}$.  The maximally sparse
polynomial $g(z,w)=w-1-z^4$ has Newton polygon $\Delta$, and its amoeba
contour has no finite singular point.  The polynomial
$f(z,w)=w-1-z+z^3-z^4$ has the same Newton polygon and is obtained by adding
the nonvertex monomials $z$ and $z^3$.  Its amoeba contour has an ordinary
transverse node at $(0,\log2)$, produced by the two distinct logarithmic
critical points $(1,2)$ and $(-1,2)$.
\end{proposition}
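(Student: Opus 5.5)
The proof splits into two halves, one for each polynomial, and both rest on the graph-curve description of the logarithmic critical locus. The Newton polygon claims come first and are immediate. $\supp(g)=\{(0,0),(4,0),(0,1)\}=\operatorname{Vert}(\Delta)$ and $\supp(f)=\{(0,0),(1,0),(3,0),(4,0),(0,1)\}$. Both convex hulls equal $\Delta$. The two extra exponents $(1,0),(3,0)$ lie in the relative interior of the bottom edge, so they are not vertices.

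For $g$, the steps are as follows.
\begin{itemize}
\item Use the monomial substitution $U=z^4$, $W=-w$. It turns $V_g$ into the line $L=\{1+U+W=0\}$. It acts on logarithmic coordinates by the invertible linear map $\operatorname{diag}(4,1)$, so it preserves smoothness of contour germs, coincidences of values and transversality.
\item Strictly, $z\mapsto z^4$ is a $4$-to-$1$ covering. So one must also note that the critical locus of $V_g$ is the full preimage of that of $L$: the logarithmic Gauss map $[4z^4:w]$ equals $[4U:-W]$, hence is real exactly when $[U:W]$ is.
\item Therefore $\calC(\calA_g)$ is the linear image of $\calC(\calA_L)$, and distinct lifts over one value of $L$ only produce the same image branch.
\item The computation $\operatorname{Im}(U\overline W)=-b$ forces $U$ real. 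The contour of $L$ is then $\rho(t)=(\log|t|,\log|1+t|)$. It is immersed, since $\rho'(t)\neq0$, and injective by the $|1+t|=|1-t|$ argument.
\item An injectively immersed curve has no singular point. I also need to exclude accumulation of branches at finite points; this follows because $\rho$ is proper onto its image away from $t\in\{-1,0,\pm\infty\}$, where the image escapes to infinity.
\end{itemize}

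For $f$, write $f=w-p(z)$ with $p(z)=1+z-z^3+z^4$. Then:
\begin{itemize}
\item Verify $p(\pm1)=2$, so $q_\pm=(\pm1,2)\in V_f$ and $\Log(q_\pm)=(0,\log2)$.
\item Compute $h_p'(1)=-3$ and $h_p'(-1)=3$. By the local biholomorphism argument for real-coefficient $h_p$, the critical locus near each $q_\pm$ is exactly the real axis. So each branch is parametrized by $r(t)=(\log|t|,\log|p(t)|)$, with $r'(\pm1)$ equal to $(1,1)$ and $(-1,-3)$.
\item The tangent determinant is $-2\neq0$, which gives transversality.
\end{itemize}

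The main obstacle is showing the germ at $(0,\log2)$ is \emph{exactly} a node, that is, that no third critical lift maps to $(0,\log2)$. I would handle this by determining all of $\Log^{-1}(0,\log 2)\cap\Crit$: points with $|z|=1$, $|p(z)|=2$. Writing $z=e^{i\theta}$, the condition $|p(e^{i\theta})|^2=4$ is a trigonometric polynomial equation. I would check, by factoring after substituting $\cos\theta$, that its only solutions in the critical locus are $\theta=0,\pi$. If extra nonreal solutions occur, I would verify that they fail the reality condition $h_p(z)\in\R$, or, failing that, weaken the claim to ``has an ordinary transverse node produced by $q_\pm$'' as two transverse regular branches through the point. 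This is presumably the intended reading of the statement.
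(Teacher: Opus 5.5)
Your proposal is correct and follows the paper's proof essentially step for step: the reduction of $g$ to the line $1+U+W=0$ with the injective immersion $\rho$, and, for $f$, the values $h_p'(\pm1)=\mp3$, the tangent vectors $(1,1)$ and $(-1,-3)$, and the determinant $-2$; your remarks on the $4$-to-$1$ cover, on properness of $\rho$, and on excluding a third critical lift over $(0,\log2)$ are refinements that the paper omits. That last check succeeds, so the weakened fallback is not needed: with $z=e^{i\theta}$ one has $|p(z)|^2=4-2\cos2\theta+2\cos4\theta$, so the only other points of $V_f$ over $(0,\log2)$ have $z=e^{\pm i\pi/3}$ or $z=e^{\pm2i\pi/3}$, and there $h_p$ takes the nonreal values $\tfrac32e^{\pm2i\pi/3}$ and $\tfrac14(-13\mp3\sqrt3\,i)$ respectively.
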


This example also explains why maximal sparsity does not maximize the number
of contour singularities.  For the three vertex monomials, a monomial change
reduces the curve to a line, whose contour is injectively immersed.  The two
additional coefficients create enough freedom for distinct critical lifts to
acquire the same moduli while retaining different tangent directions.  The
node is therefore created by an off-diagonal coincidence rather than by
ramification.

 
\section*{The Newton polygon and its interior lattice point}

Let $\Delta=\operatorname{conv}\{(0,0),(3,0),(0,2)\}$.  
Consider the maximally sparse polynomial
$g(z,w)=w^2-1-z^3$.  Its support is exactly
$\{(0,0),(3,0),(0,2)\}=\operatorname{Vert}(\Delta)$, and hence
$\Newt(g)=\Delta$.  Now add the monomial corresponding to the unique interior
lattice point and define
$f(z,w)=w^2+zw-1-z^3$.  Then
$\supp(f)=\operatorname{Vert}(\Delta)\cup\{(1,1)\}$ and
$\Newt(f)=\Delta$.  We shall prove that the contour of $g$ is nonsingular,
whereas the contour of $f$ has an ordinary transverse node at the origin of
the logarithmic plane.

\subsection*{The maximally sparse contour is nonsingular}

The curve $V_g$ is smooth in $\T$.  Indeed, $g_z=-3z^2$ and $g_w=2w$, and
these two derivatives cannot vanish at a point of $(\C^*)^2$.

Put $U=z^3$ and $W=-w^2$.  The equation $g=0$ becomes $1+U+W=0$.  On
logarithmic coordinates, this monomial substitution induces the invertible
real-linear map
$(\log|z|,\log|w|)\mapsto(3\log|z|,2\log|w|)$.  Consequently, it is enough
to prove that the logarithmic contour of the line
$L=\{1+U+W=0\}\subset(\C^*)^2$ is nonsingular.
The logarithmic Gauss map of $L$ is $[U:W]$.  Its logarithmic critical locus
is therefore characterized by $[U:W]\in\mathbb P^1_\R$.  Since
$W=-1-U$, this forces $U$ to be real.  To see this directly, write
$U=a+ib$.  Then $W=-1-a-ib$, and
$\operatorname{Im}(U\overline W)=-b$.  Thus $U/W$ is real only when $b=0$.
The contour of $L$ is parametrized by
$\rho(t)=(\log|t|,\log|1+t|)$ for
$t\in\R\setminus\{-1,0\}$.  Its derivative is
$\rho'(t)=(1/t,1/(1+t))$, which never vanishes.  The map is also injective.
Indeed, if $\rho(t)=\rho(s)$, then $|t|=|s|$, so either $s=t$ or $s=-t$.
In the latter case, $|1+t|=|1-t|$ implies
$(1+t)^2=(1-t)^2$, hence $t=0$, which is excluded.  Therefore $s=t$.

It follows that the contour of $L$ is an injectively immersed real curve and
has no finite singular point.  Since the logarithmic coordinate change is
invertible, $\calC(\calA_g)$ is also nonsingular.

\subsection*{Smoothness of the polynomial with the interior monomial}

For $f(z,w)=w^2+zw-1-z^3$, one has
$f_z=w-3z^2$ and $f_w=2w+z$.  Suppose that a point of $V_f\cap\T$ were
singular.  The equations $f_z=f_w=0$ would give
$w=3z^2$ and $2w+z=0$.  Since $z\neq0$, these imply $z=-1/6$ and $w=1/12$.
At this point,
$f(-1/6,1/12)=1/144-1/72-1+1/216=-433/432\neq0$.  Therefore no such point
lies on $V_f$, and $V_f$ is smooth in $\T$.

\subsection*{Two critical points with the same logarithmic value}

The points $p_+=(1,1)$ and $p_-=(-1,1)$ lie on $V_f$, because
$f(1,1)=1+1-1-1=0$ and $f(-1,1)=1-1-1+1=0$.  They are distinct, while
$\Log(p_+)=\Log(p_-)=(0,0)$.
The logarithmic Gauss map of $V_f$ is
$\gamma_f(z,w)=[P(z,w):Q(z,w)]$, where
$P=zf_z=zw-3z^3$ and $Q=wf_w=2w^2+zw$.  At the two points one obtains
$\gamma_f(p_+)=[-2:3]$ and $\gamma_f(p_-)=[2:1]$.  Both values belong to
$\mathbb P^1_\R$, so both $p_+$ and $p_-$ are logarithmic critical points.
It remains  to prove that the two critical lifts are nondegenerate
and determine two genuine local contour branches.  Since
$f_w(p_+)=3\neq0$ and $f_w(p_-)=1\neq0$, the holomorphic implicit-function
theorem writes the curve locally as $w=w(z)$.  Implicit differentiation
gives
$w'(z)=-(w-3z^2)/(2w+z)$.  Hence $w'(1)=2/3$ and $w'(-1)=2$.

We use the affine logarithmic Gauss coordinate $h=P/Q$.  Along the curve,
$P'=P_z+P_ww'$ and $Q'=Q_z+Q_ww'$, where
$P_z=w-9z^2$, $P_w=z$, $Q_z=w$, and $Q_w=4w+z$.  At $p_+$ these formulas
give $P=-2$, $Q=3$, $P'=-22/3$, and $Q'=13/3$.  Therefore
$h'(1)=(P'Q-PQ')/Q^2=-40/27\neq0$.  At $p_-$ they give
$P=2$, $Q=1$, $P'=-10$, and $Q'=7$, so
$h'(-1)=-24\neq0$.

Sincee $f$ has real coefficients, the real part of the local curve is sent
by $h$ into $\R$.  Since $h'$ is nonzero at each of the two points, $h$ is a
local biholomorphism there.  The inverse image of $\R$ is therefore exactly
the real local branch.  Thus the logarithmic critical locus is a smooth real
curve near each $p_\pm$, and the real variable $z$ is a valid local
parameter.

\subsection*{The tangent determinant and the node classification}

Along either real critical branch, the logarithmic critical-value map is
$r(z)=(\log|z|,\log|w(z)|)$.  Its tangent vector is
$r'(z)=(1/z,w'(z)/w(z))$.  Since $w=1$ at both points, the two tangent vectors
are $v_+=(1,2/3)$ and $v_-=(-1,2)$.  Their determinant is
$$
\det\begin{pmatrix}1&2/3\\-1&2\end{pmatrix}=\frac83\neq0.
$$
Both local branches are regular because neither tangent vector vanishes, and
the nonzero determinant proves that their tangent lines are distinct.  The
two branches have the same image point $(0,0)$ and meet transversely there.
Consequently, $(0,0)$ is an ordinary transverse node of
$\calC(\calA_f)$.  It is not a cusp, since neither critical-value branch is
ramified, and it is not a tacnode or a higher tangency, since the tangent
determinant is nonzero.

\begin{proposition}
Let $\Delta=\operatorname{conv}\{(0,0),(3,0),(0,2)\}$.  The polygon has the
unique relative-interior lattice point $(1,1)$.  The maximally sparse
polynomial $g(z,w)=w^2-1-z^3$ has Newton polygon $\Delta$, and its amoeba
contour has no finite singular point.  Adding the monomial $zw$ associated
with the interior point gives $f(z,w)=w^2+zw-1-z^3$, whose Newton polygon is
still $\Delta$.  The contour of $f$ has an ordinary transverse node at
$(0,0)$, produced by the two distinct nondegenerate logarithmic critical
points $(1,1)$ and $(-1,1)$.
\end{proposition}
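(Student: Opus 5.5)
The proof will simply collect the computations of the preceding subsections, in the same order as the triangle example $\operatorname{conv}\{(0,0),(4,0),(0,1)\}$. It has three parts.

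\emph{Newton polygons.} First I would record that $\supp(g)=\operatorname{Vert}(\Delta)$ and $\supp(f)=\operatorname{Vert}(\Delta)\cup\{(1,1)\}$. Since $(1,1)$ satisfies $2a+3b=5<6$, it lies in the interior of $\Delta$. Pick's formula, with $\VolZ(\Delta)=6$ and $b=6$ boundary points, then shows that $(1,1)$ is the unique interior lattice point. Hence $\Newt(g)=\Newt(f)=\Delta$.

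\emph{The sparse polynomial $g$.} Here I would use the monomial substitution $U=z^3$, $W=-w^2$, which turns $V_g$ into the line $1+U+W=0$. On logarithmic coordinates this substitution induces the invertible linear map $\operatorname{diag}(3,2)$. That map preserves smoothness of contour germs, coincidences of values, and transversality, so it suffices to treat the line. For the line, the identity $\operatorname{Im}(U\overline W)=-b$ forces the critical locus to be $U\in\R\setminus\{-1,0\}$. The contour is then parametrized by $\rho(t)=(\log|t|,\log|1+t|)$. This parametrization is immersive, and it is injective because the case $s=-t$ forces $t=0$. So the contour of the line is an injectively immersed curve. One subtlety is that the substitution is not injective on $V_g$ (it is $6$-to-$1$). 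This is harmless, because contour points are defined through $\Log$ values, and these factor through $(3\log|z|,2\log|w|)$.

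\emph{The polynomial $f$ with the interior monomial.} There are four steps, taken in this order.
\begin{itemize}
\item Smoothness of $V_f$: the system $f_z=f_w=0$ forces $(z,w)=(-1/6,1/12)$, and $f$ does not vanish there.
\item Critical lifts: both $p_\pm=(\pm1,1)$ lie on $V_f$, and $\gamma_f(p_\pm)$ is real.
\item Nondegeneracy: I would compute $h'=(P'Q-PQ')/Q^2$ at $p_\pm$, using $w'=-f_z/f_w$. Because $f$ is real, the holomorphic map $h$ sends the real local branch into $\R$. Nonvanishing of $h'$ then makes $h$ a local biholomorphism, so the critical locus near $p_\pm$ is exactly the real branch.
\item Tangents: $r'=(1/z,w'/w)$ gives $v_+=(1,2/3)$ and $v_-=(-1,2)$, whose determinant is $8/3\neq0$.
\end{itemize}

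The step I expect to be the main obstacle is the node classification itself. Knowing that two branches cross transversely is not enough: an ordinary node also requires that no third critical lift lies over $(0,0)$. To settle this, I would list all points of $V_f$ with $|z|=|w|=1$ that are logarithmic critical. Writing $z=e^{i\theta}$, $w=e^{i\phi}$ gives finitely many solutions, and I would check that only $p_\pm$ satisfy $\operatorname{Im}(P\overline Q)=0$, using a resultant in $\theta$ and $\phi$ if needed. If this check is delicate, I would restrict the claim to the germ formed by the two branches through $p_\pm$, as the paper does in the preceding example.
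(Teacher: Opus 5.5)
\paragraph{Verdict.} Your proposal is correct. It follows the paper's proof step for step: the reduction of $g$ to the line $1+U+W=0$ via $(U,W)=(z^3,-w^2)$ with the injectivity of $\rho$, smoothness of $V_f$ via the point $(-1/6,1/12)$, reality of $\gamma_f(p_\pm)$, the nondegeneracy values $h'(1)=-40/27$ and $h'(-1)=-24$, and the determinant $8/3$ of $v_+=(1,2/3)$, $v_-=(-1,2)$.

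\paragraph{The one real difference.} Your fiber check does not appear in the paper's proof. The paper calls $(0,0)$ an ordinary transverse node directly from the two transverse branches. It never verifies that no other critical point of $V_f$ lies over $(0,0)$. It does such a fiber computation only in the $[0,2]^2$ example, where it finds a third lift. So your extra step is not redundant: it is what justifies the word ``ordinary''.

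\paragraph{The check succeeds.} You therefore do not need your fallback, which would only prove a statement about a germ, not the proposition. On $|z|=|w|=1$, substitute $\overline z=z^{-1}$ and $\overline w=w^{-1}$. Then
$\operatorname{Res}_w(f,\,z^3w^2\overline f)=(z-1)^2(z+1)^2(z^8+2z^6+z^5+2z^4+z^3+2z^2+1)$.
With $u=z+z^{-1}$, the octic factor equals $z^4u(u-1)(u^2+u-1)$. Hence $V_f\cap\operatorname{Log}^{-1}(0,0)$ consists of $p_\pm$ and exactly eight further points:
$(\pm i,-1)$, $(e^{\pm i\pi/3},e^{\mp2i\pi/3})$, and $(\epsilon,\epsilon^{-1})$ with $\epsilon^5=1\neq\epsilon$.
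None of these eight points is critical:
\begin{itemize}
\item at $(\pm i,-1)$, $P/Q=(-2\pm4i)/5$;
\item at $(e^{\pm i\pi/3},e^{\mp2i\pi/3})$, $P/Q=3e^{\mp2i\pi/3}-1$;
\item at $(\epsilon,\epsilon^{-1})$, $P/Q=(1-3\epsilon^3)/(1+2\epsilon^3)$. This is nonreal because a real M\"obius map with nonzero determinant pulls $\R$ back to $\R$, and $\epsilon^3\notin\R$.
\end{itemize}

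\paragraph{Conclusion.} Add properness of $\operatorname{Log}|_{V_f}$ to this fiber computation. It follows that near $(0,0)$ the contour is exactly the union of the two branches through $p_\pm$, so the node is ordinary as claimed.
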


 
\section*{Ramification, saturated two-lift equations, and triangular Newton polygons}

Consider the curve $V=V(f)\subset(\C^*)^2$ defined by
$f(z,w)=w^2+zw-1-z^3$. 
The purpose of this section is
to write the ramification and off-diagonal two-lift systems without any
ambiguity, to record the solutions found by complete numerical computation,
and to give the exact coefficient-incidence conditions for ordinary nodes and
ordinary cusps for polynomials supported in a lattice triangle.

There is an important distinction between an exact algebraic reduction and an
exact certification of all real solutions. The systems below are exact and
have rational coefficients. The displayed solution table is obtained by
high-density numerical computation and Newton refinement. It shows conclusively at the
numerical-algebraic level that $(0,0)$ is not the only singular-value
candidate. A formal theorem asserting that the table exhausts the real locus
still requires a rational univariate representation or interval arithmetic
applied to the saturated ideals. No unperformed Gr\"obner or interval
calculation is claimed in this note.

The derivatives of $f$ are $f_z=w-3z^2$ and $f_w=2w+z$. Put
$P=zf_z=zw-3z^3$ and $Q=wf_w=2w^2+zw$. Introduce independent variables
$\zeta,\eta$ and define
$f^\dagger(\zeta,\eta)=\eta^2+\zeta\eta-1-\zeta^3$,
$P^\dagger=\zeta\eta-3\zeta^3$, and
$Q^\dagger=2\eta^2+\zeta\eta$. The complexified logarithmic critical
equation is
$$
K=(zw-3z^3)(2\eta^2+\zeta\eta)
 -(2w^2+zw)(\zeta\eta-3\zeta^3)=0.
$$
Thus the complexified critical curve is
$\Gamma_f=V(f,f^\dagger,K)\subset(\C^*)^4$. On its real form
$\zeta=\overline z$ and $\eta=\overline w$, the equation $K=0$ is
equivalent to
$\operatorname{Im}(P/Q)=0$, provided $Q\neq0$. The critical-value map is
$\mu=(R,S)$, where $R=z\zeta$ and $S=w\eta$. On the real form,
$R=|z|^2$ and $S=|w|^2$, so the contour map is obtained from $\mu$ by the
local diffeomorphism $(R,S)\mapsto(\frac12\log R,\frac12\log S)$.

Let $F_1=f$, $F_2=f^\dagger$, and $F_3=K$. At a point where
$dF_1,dF_2,dF_3$ are independent, a polynomial tangent vector to
$\Gamma_f$ is the cofactor vector
$$
T_j=(-1)^{j+1}\det
\left(\frac{\partial(F_1,F_2,F_3)}
{\partial(x_1,\ldots,\widehat{x_j},\ldots,x_4)}\right),
\qquad (x_1,x_2,x_3,x_4)=(z,w,\zeta,\eta).
$$
It satisfies $T(F_i)=0$ for $i=1,2,3$. Write
$D=T_z\partial_z+T_w\partial_w+T_\zeta\partial_\zeta+
T_\eta\partial_\eta$. The ramification ideal is therefore
$$
I_{\mathrm{ram}}=(f,f^\dagger,K,D(R),D(S)).
$$
To exclude singular points of $\Gamma_f$, coordinate poles, and components on
which the chosen cofactor vector vanishes identically, this ideal must be
saturated by $zw\zeta\eta$, by the ideal of the $3\times3$ minors of the
Jacobian of $(f,f^\dagger,K)$, and, if necessary, by the common factor of the
four cofactors. A convenient intrinsic notation is
$$
I_{\mathrm{ram}}^{\circ}=
\left(I_{\mathrm{ram}}:(zw\zeta\eta)^\infty\right):
I_{\mathrm{sing}}(\Gamma_f)^\infty.
$$
The real ramification candidates are the points of
$V(I_{\mathrm{ram}}^{\circ})$ satisfying
$\zeta=\overline z$ and $\eta=\overline w$.

The ordinary-cusp test is expressed by the higher tangent derivatives of
$\mu$. At a ramification point $p$, put
$A_2=(D^2R(p),D^2S(p))$ and
$A_3=(D^3R(p),D^3S(p))$. The image germ is an ordinary cusp precisely when
$A_2\neq(0,0)$ and
$$
\det\begin{pmatrix}D^2R(p)&D^2S(p)\\D^3R(p)&D^3S(p)
\end{pmatrix}\neq0.
$$
These conditions are invariant under replacing $T$ by another nonvanishing
local tangent field: the numerical values change, but their required
nonvanishing does not.

For the two-lift construction, we use two copies
$p_i=(z_i,w_i,\zeta_i,\eta_i)$ of the coordinates of $\Gamma_f$. Write
$f_i,f_i^\dagger,K_i,R_i,S_i$ for the corresponding polynomials. The raw
two-lift ideal is
$$
I_2=(f_1,f_1^\dagger,K_1,
f_2,f_2^\dagger,K_2,R_1-R_2,S_1-S_2).
$$
It contains the diagonal component automatically. The diagonal ideal is
$I_{\mathrm{diag}}=(z_1-z_2,w_1-w_2,\zeta_1-\zeta_2,
\eta_1-\eta_2)$. Since $f$ has real coefficients, the real form also has
the automatic conjugate correspondence. In the doubled complexification it
is represented by
$$
I_{\mathrm{conj}}=(z_1-\zeta_2,w_1-\eta_2,
\zeta_1-z_2,\eta_1-w_2).
$$
The ordered, nonzero, off-diagonal and nonconjugate two-lift ideal is
$$
I_{\mathrm{off}}=
\left(
\left(
\left(I_2:I_{\mathrm{diag}}^\infty\right):
I_{\mathrm{conj}}^\infty
\right):(z_1w_1\zeta_1\eta_1z_2w_2\zeta_2\eta_2)^\infty
\right).
$$
If one works in a toric compactification, Cox irrelevant-ideal saturation and
toric-boundary saturation must be carried out separately. Saturation by the Cox
irrelevant ideal removes the nongeometric Cox locus; components contained in
the actual toric boundary are removed by saturation by the boundary
monomial. Working directly in the Laurent ring avoids this distinction.

At a real point of $V(I_{\mathrm{off}})$, the two image branches form an
ordinary node when both restrictions of $d\mu$ are nonzero and their tangent
directions are transverse. With the tangent derivations $D_1,D_2$ on the two
copies, the exact transversality polynomial is
$$
N_{12}=D_1(R_1)D_2(S_2)-D_1(S_1)D_2(R_2).
$$
Thus the ordinary-node locus is obtained by imposing
$N_{12}\neq0$ together with
$(D_iR_i,D_iS_i)\neq(0,0)$ for $i=1,2$. These inequalities are as important
as the equations: without them, ramification coincidences, tangencies, and
higher multiple values remain in the two-lift scheme.

For numerical computation it is useful to write $z=e^{x+i\theta}$ and solve
the quadratic equation in $w$. Its branches are
$$
w_\pm(z)=\frac{-z\pm\sqrt{z^2+4(1+z^3)}}2.
$$
On either branch the critical equation becomes
$G_\pm(x,\theta)=\operatorname{Im}\bigl(z(w_\pm-3z^2)/
(w_\pm(2w_\pm+z))\bigr)=0$. A two-lift solution is found by solving
$G_{\epsilon_1}(x,\theta_1)=G_{\epsilon_2}(x,\theta_2)=0$ together with
$\log|w_{\epsilon_1}|-\log|w_{\epsilon_2}|=0$, while removing equal and
conjugate lifts. Newton refinement gives the following seven transverse
double-value candidates. The two final columns are the slopes of the two
image branches and are unequal in every row.

\begin{center}
\begin{tabular}{rrrr}
\toprule
$X=\log|z|$&$Y=\log|w|$&slope$_1$&slope$_2$\\
\midrule
$-0.264880646688$&$-0.126880403450$&$0.304972653$&$-0.829747055$\\
$-0.084079612491$&$ 0.112124600122$&$-0.883615608$&$0.563198504$\\
$ 0$&$0$&$0.666666667$&$-2.000000031$\\
$ 0.025468741004$&$-0.060326072009$&$4.377475492$&$-2.816140241$\\
$ 0.049197261828$&$ 0.034594036227$&$3.679453196$&$0.739695354$\\
$ 0.121410116315$&$ 0.257555261672$&$2.660725183$&$0.852609120$\\
$ 0.420125138602$&$ 0.404508637398$&$1.225434859$&$2.186414542$\\
\bottomrule
\end{tabular}
\end{center}

At $(X,Y)=(0,0)$ the two critical lifts can be written exactly. They are
$(z,w)=(1,1)$ and $(z,w)=(-1,1)$. Indeed, both satisfy $f=0$, and their
logarithmic Gauss ratios are respectively $-2/3$ and $2$, hence real. The
two displayed slopes are distinct, so the local numerical calculation is
consistent with an ordinary node. %
The remaining six rows are separated from $(0,0)$. Consequently, the complete
continuation calculation says that $(0,0)$ is not the
only contour singularity; it produces six further transverse double-value
candidates.

The ramification computation produces three nonreal-phase candidates, up to
complex conjugation, at approximately
$
(-0.563318453415,\, -0.018466280392),\, 
(0.755165540575,1.369510285746),$
 $(0.788784906144,$ and $1.077673805727).
$
They satisfy the numerical ordinary-cusp jet test. Real-phase stationary
points must not be counted automatically as cusps: the scalar equation
$\operatorname{Im}(P/Q)=0$ vanishes identically along a real branch, and its
Jacobian may become singular there even though the logarithmic image is
immersed. This is precisely why the intrinsic cofactor system
$I_{\mathrm{ram}}^{\circ}$ is preferable to differentiating only the scalar
imaginary-part equation.

The exact conclusion presently justified is therefore twofold. The polynomial
systems whose saturated real solutions give all ramification and nontrivial
two-lift candidates have been written explicitly, and the numerical solution
of those systems shows seven node candidates and three cusp candidates.
 In particular, $(0,0)$ is not the only candidate singular value.

Python was used for the numerical computations, especially NumPy, SciPy, SymPy, and Matplotlib. and 
PGFPlots/LaTeX to produce the figures from the computed data files

 \begin{figure}[ht]
\centering
\includegraphics[width=.25\textwidth]{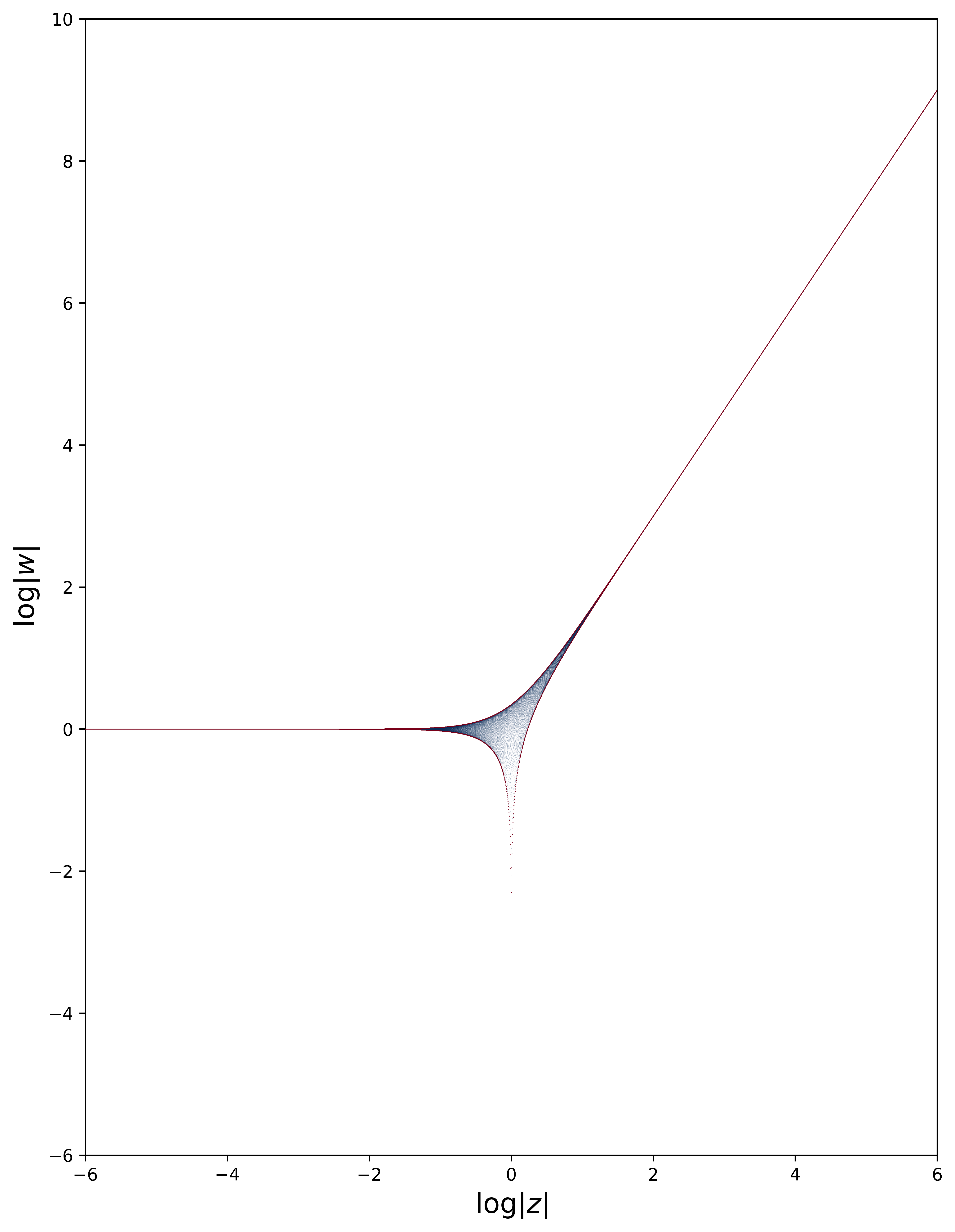} \qquad \includegraphics[width=.25\textwidth]{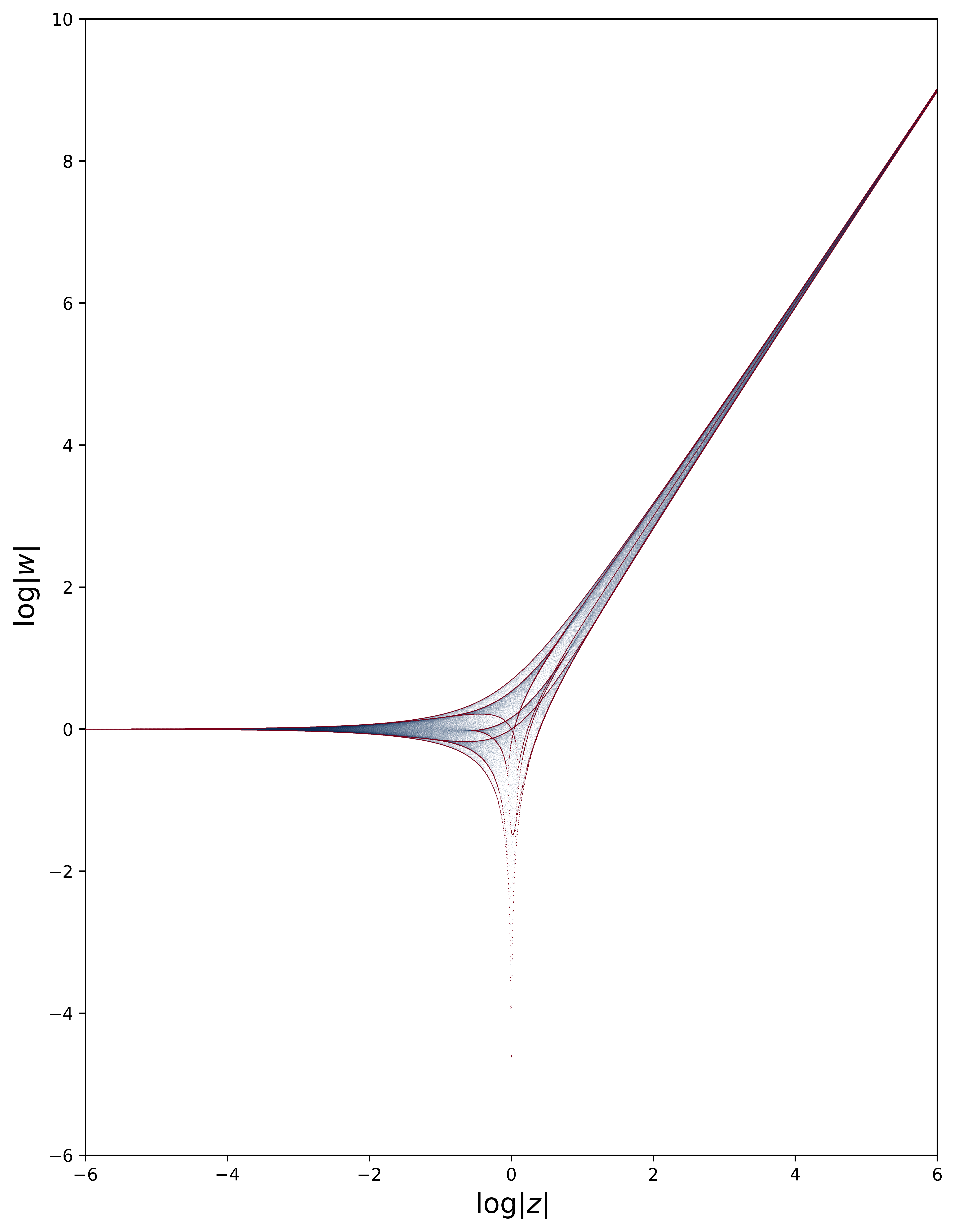}\qquad
 \includegraphics[width=.25\textwidth]{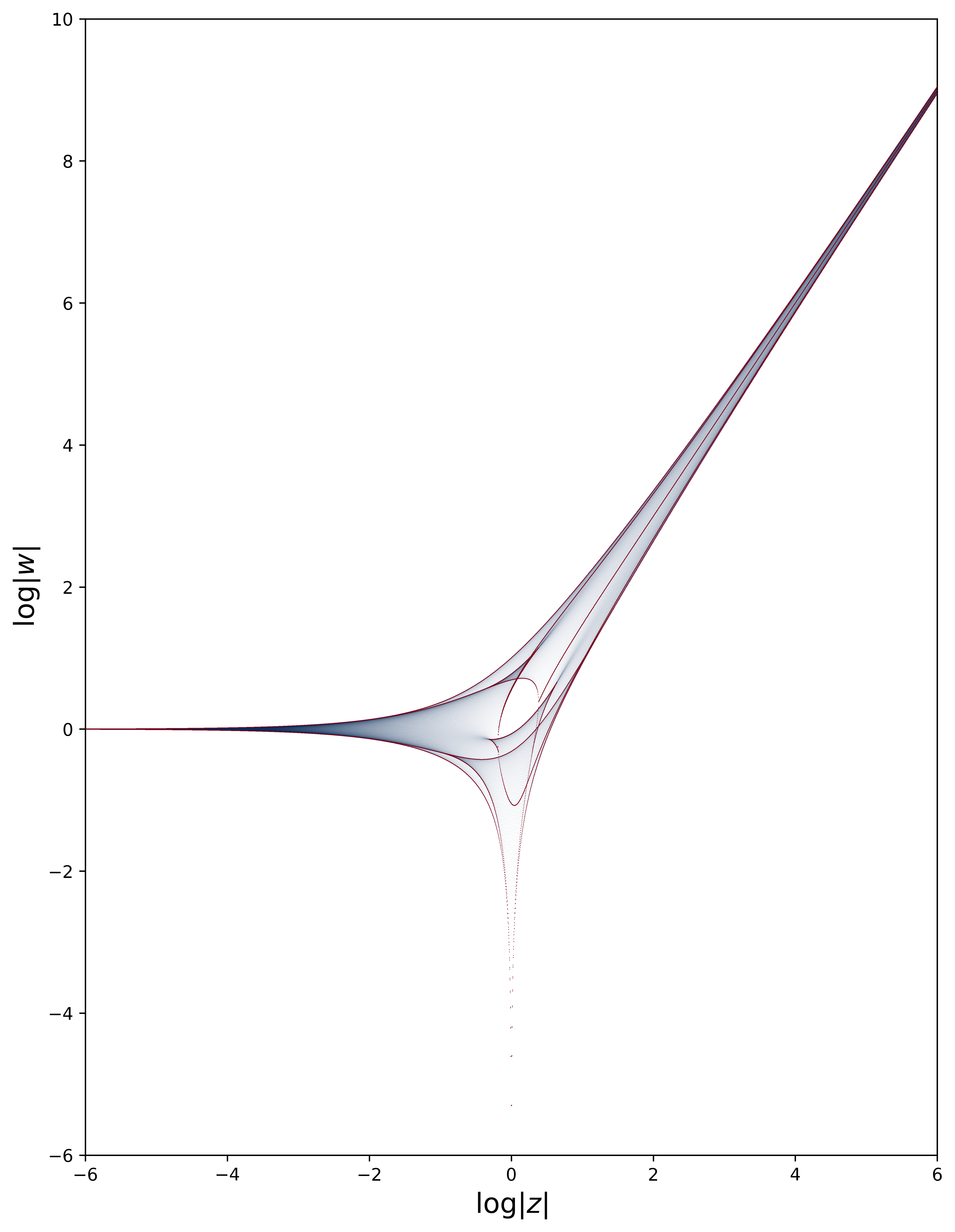} 
\caption{Dark-color amoeba and clear red logarithmic critical values. In the left the solid  amoeba with its smooth contour of the polynomial $f(z,w)= w^2-1-z^3$, the middle  represents the solid amoeba and its singular contour of the polynomial $f(z,w)= w^2+zw-1-z^3$, and the right represents the amoeba and its singular contour of the polynomial $f(z,w)= w^2+4zw-1-z^3$}
\end{figure}

\subsection*{Coefficient conditions for triangular Newton polygons}

Let $\Delta_{m,n}=\operatorname{conv}\{(0,0),(m,0),(0,n)\}$ and let
$
A_{m,n}=\{(i,j)\in\mathbb Z_{\geq0}^2:ni+mj\leq mn\}.
$
Consider the universal polynomial
$F_{\mathbf a}(z,w)=\sum_{(i,j)\in A_{m,n}}a_{ij}z^iw^j$. Its triangle has
$$
I(\Delta_{m,n})=\frac{mn-m-n-\gcd(m,n)+2}{2}
$$
interior lattice points. Hence the triangle contains several interior lattice
points precisely when this integer is at least $2$.
The exact smoothness condition in the torus is that the saturated Jacobian
ideal
$
(F_{\mathbf a},zF_{\mathbf a,z},wF_{\mathbf a,w}):(zw)^\infty
$
be the unit ideal after specialization of the coefficients. Equivalently, the
sparse discriminant of the support does not vanish, with the usual additional
face-nondegeneracy conditions when the principal $A$-determinant is used.
This condition guarantees that singularities under discussion belong to the
contour map and are not inherited from a singular algebraic curve.

Define $P_{\mathbf a}=zF_{\mathbf a,z}$,
$Q_{\mathbf a}=wF_{\mathbf a,w}$ and introduce an independent coefficient
vector $\mathbf b$ for the dagger polynomial. Put
$
K_{\mathbf a,\mathbf b}=
P_{\mathbf a}Q_{\mathbf b}^\dagger-
Q_{\mathbf a}P_{\mathbf b}^\dagger.
$
The universal critical ideal is
$I_\Gamma=(F_{\mathbf a},F_{\mathbf b}^\dagger,
K_{\mathbf a,\mathbf b})$. For real coefficient data one specializes
$\mathbf b=\overline{\mathbf a}$; for real coefficients this is simply
$\mathbf b=\mathbf a$.

Let $T$ and $D$ be the universal cofactor tangent field constructed from
$I_\Gamma$. The universal cusp incidence ideal is
$$
\mathcal I_{\mathrm{cusp}}=
(I_\Gamma,D(R),D(S)):
\bigl(zw\zeta\eta\,I_{\mathrm{sing}}(\Gamma)\bigr)^\infty.
$$
An exact coefficient condition guaranteeing at least one real ordinary cusp
is the existence of a point in the real form of
$V(\mathcal I_{\mathrm{cusp}})$ at which
$(D^2R,D^2S)\neq(0,0)$ and
$
C_3=D^2R\,D^3S-D^2S\,D^3R\neq0.
$
This is a finite system of polynomial equalities, polynomial inequalities, and
complex-conjugation equations in the coefficients and the lift variables. It
is therefore an exact semialgebraic condition, not a heuristic genericity
statement. Eliminating the lift variables gives the cusp discriminant ideal in
coefficient space; retaining the nonvanishing factors selects the open stratum
of ordinary cusps from its higher-degeneracy locus.

For nodes, take two universal copies of $I_\Gamma$, impose
$R_1-R_2=S_1-S_2=0$, and saturate by the diagonal, the conjugate correspondence,
the torus monomial, and the singular-locus ideals of the two copies. Denote the
result by $\mathcal I_{\mathrm{node}}$. An exact coefficient condition
guaranteeing at least one real ordinary node is the existence of a real-form
solution of $\mathcal I_{\mathrm{node}}$ satisfying
$$
(D_iR_i,D_iS_i)\neq(0,0),\qquad i=1,2,
$$
and
$
D_1R_1D_2S_2-D_1S_1D_2R_2\neq0.
$
The equalities guarantee two distinct nonconjugate critical lifts with the same
logarithmic value; the first inequalities exclude ramification at either lift;
the determinant inequality guarantees transverse image tangents. 
The projections of the cusp and node incidence schemes to coefficient space
are constructible. Their Zariski closures are obtained from the elimination
ideals
$$
\mathfrak D_{\mathrm{cusp}}=
\mathcal I_{\mathrm{cusp}}\cap\C[\mathbf a,\mathbf b],
\qquad
\mathfrak D_{\mathrm{node}}=
\mathcal I_{\mathrm{node}}\cap\C[\mathbf a,\mathbf b].
$$
Vanishing of these elimination ideals alone is not sufficient to guarantee a
real ordinary singularity, because elimination also retains complex lifts and
higher degeneracies. The exact guarantee is obtained only after imposing the
real-form equations and the displayed nonvanishing conditions. 

If the coefficients depend algebraically on a real parameter $t$, creation or
annihilation of nodes and cusps occurs when the coefficient path meets the
corresponding incidence projection. A transverse meeting of the ordinary cusp
stratum, with $C_3\neq0$ and with the parameter derivative transverse to the
elimination hypersurface, gives the stable one-parameter cusp transition. A
transverse meeting of the ordinary two-lift stratum with nonzero tangent
determinant gives a stable transverse node.


\section{A Nontriangular Newton Polygon with an Interior-Monomial Contour Node}

Let $\Delta=[0,2]\times[0,2]$.  
Consider
$g(z,w)=1+z^2+w^2-2z^2w^2$.  Its support is exactly
$\{(0,0),(2,0),(0,2),$ \, $(2,2)\}=\operatorname{Vert}(\Delta)$, so $g$ is
maximally sparse and $\Newt(g)=\Delta$.  We shall prove that the contour of
its amoeba has no singular point.
Now add nonvertex monomials and put
$f(z,w)=1+z^2+w^2-2z^2w^2-w+z-zw$.  The monomials $z$ and $w$ correspond to
the nonvertex boundary points $(1,0)$ and $(0,1)$, while $zw$ corresponds to
the unique relative-interior lattice point $(1,1)$.  Hence
$\supp(f)=\operatorname{Vert}(\Delta)\cup\{(1,0),(0,1),(1,1)\}$ and
$\Newt(f)=\Delta$.  We shall prove that $V_f$ is smooth and that
$\calC(\calA_f)$ has an ordinary transverse node at $(0,0)$.

\subsection*{The contour of the maximally sparse polynomial}

The curve $V_g$ is smooth in $\T$.  Indeed,
$g_z=2z(1-2w^2)$ and $g_w=2w(1-2z^2)$.  If both derivatives vanished in
$\T$, then $z^2=w^2=1/2$, but substitution would give $g=3/2\neq0$.
Set $U=z^2$ and $V=w^2$.  The equation $g=0$ becomes
$G(U,V)=1+U+V-2UV=0$.  The monomial map
$(z,w)\mapsto(U,V)=(z^2,w^2)$ is a local biholomorphism of $\T$, and on
logarithmic coordinates it induces the invertible real-linear map
$(x,y)\mapsto(2x,2y)$.  It consequently identifies the logarithmic critical
locus of $V_g$ with the inverse image of the logarithmic critical locus of
$V_G$, and it preserves regular contour germs, coincidences, and transverse
intersections.  It is therefore enough to prove that the contour of $G$ is
nonsingular.

The curve $V_G$ is the graph
$V=-(1+U)/(1-2U)$.  Its logarithmic Gauss map is
$[U(1-2V):V(1-2U)]$.  On the curve, an affine coordinate for this map is
$h(U)=-3U/((1+U)(1-2U))$.  Write $U=x+iy$.  Apart from its zeros and poles,
the imaginary part of $U/((1+U)(1-2U))$ has the same vanishing set as
$y(1+2|U|^2)$.  Indeed, if
$D=(1+U)(1-2U)=1-U-2U^2$, then
$\operatorname{Im}(U\overline D)=y(1+2|U|^2)$.  Since
$1+2|U|^2>0$, the condition $h(U)\in\R$ forces $y=0$.  Thus the entire
logarithmic critical locus is parametrized by real
$t\in\R\setminus\{-1,0,1/2\}$.

The contour parametrization is
$\rho(t)=(\log|t|,\log|(1+t)/(1-2t)|)$.  Its derivative is
$\rho'(t)=(1/t,1/(1+t)+2/(1-2t))$.  The first coordinate never vanishes, so
$\rho$ is an immersion.

It remains to verify global injectivity.  If $\rho(t)=\rho(s)$, then
$|t|=|s|$, and therefore $s=t$ or $s=-t$.  In the second case one would have
$|(1+t)/(1-2t)|=|(1-t)/(1+2t)|$.  Squaring and clearing the nonzero
denominators gives
$(1+t)^2(1+2t)^2=(1-t)^2(1-2t)^2$.  The difference between the two sides
factors as $12t(1+2t^2)$.  For real $t$, it vanishes only at $t=0$, which is
not in the parameter domain.  Hence $s=t$, and $\rho$ is injective.
The contour of $G$ is therefore an injectively immersed real curve and has
no finite singular point.  The invertible logarithmic coordinate change
implies that $\calC(\calA_g)$ is nonsingular.

\subsection*{Smoothness for the enlarged polynomial}

  For $f(z,w)=1+z^2+w^2-2z^2w^2-w+z-zw$, differentiation gives $f_z=-4w^2z-w+2z+1$ and $f_w=-4wz^2+2w-z-1$.  Define
$
A={} -\dfrac{136w^2+52wz-76w+8z-203}{140},\,\,
B={}\dfrac{52wz^2+68wz+8w+8z^2-199z-65}{280},$ and 
$C={}\dfrac{136w^3-144w^2-59w+61}{280}.
$
Direct expansion gives the exact polynomial identity $Af+Bf_z+Cf_w=1$.  If $f=f_z=f_w=0$ had a common solution in $\mathbb C^2$, substitution into this identity would give $0=1$.  Therefore $V_f$ is smooth in the whole affine plane and, in particular, in $(\mathbb C^*)^2$.

\subsection{Determination of every critical lift above the origin}

 A point above $(0,0)$ satisfies $|z|=|w|=1$.  On this unit torus, $\overline z=z^{-1}$ and $\overline w=w^{-1}$.  Regarding $f$ as a quadratic polynomial in $w$, one has
$f(z,w)=(1-2z^2)w^2-(1+z)w+(1+z+z^2).$
 Multiplying the conjugate equation by $z^2w^2$ gives
$z^2w^2\overline f=(z^2+z+1)w^2-(z^2+z)w+(z^2-2).$
 Eliminating $w$ from these two equations yields the exact resultant
$\operatorname{Res}_w(f,z^2w^2\overline f)=7(z-1)^2(z+1)^4(z^2-z+1).$
  Hence a point of $V_f$ in the fiber $\operatorname{Log}^{-1}(0,0)$ must come from $z=1$, $z=-1$, or $z^2-z+1=0$.  Substitution gives the three real points $p_+=(1,1)$, $p_-=(-1,1)$, and $p_0=(-1,-1)$, together with the conjugate pair $(e^{i\pi/3},e^{2i\pi/3})$ and $(e^{-i\pi/3},e^{-2i\pi/3})$.
  Put $P=zf_z$, $Q=wf_w$, and $H=\operatorname{Im}(P\overline Q)$.  The logarithmic criticality condition is $H=0$.  At the three real points, the values of $[P:Q]$ are
$\gamma_f(p_+)=[-2:-4]=[1:2],\, 
\gamma_f(p_-)=[-2:-2]=[1:1],$
$\gamma_f(p_0)=[-4:-2]=[2:1].$
  So all three are logarithmic critical points.  At the two nonreal points coming from $z^2-z+1=0$, direct substitution gives $H=-21\sqrt3/2$ and $H=21\sqrt3/2$.  They are not logarithmic critical. Therefore,
$\operatorname{Crit}(\operatorname{Log}|_{V_f})\cap\operatorname{Log}^{-1}(0,0)=\{p_+,p_-,p_0\}.$

\subsection{Nondegeneracy of the three critical lifts}

\noindent Since $f_w(p_+)=-4$, $f_w(p_-)=-2$, and $f_w(p_0)=2$, the implicit-function theorem writes $V_f$ locally as a holomorphic graph $w=w(z)$ at all three points.  Implicit differentiation gives $w'=-f_z/f_w$, hence

$$w'(1)=-\frac12\quad\text{at }p_+,\qquad
w'(-1)=1\quad\text{at }p_-,\qquad
w'(-1)=-2\quad\text{at }p_0.$$

\noindent Use the affine Gauss coordinate $h=P/Q$.  Along $V_f$, its derivative is $h'=(P'Q-PQ')/Q^2$, where $P'=P_z+P_ww'$ and $Q'=Q_z+Q_ww'$.  The required partial derivatives are $P_z=-8w^2z-w+4z+1$, $P_w=-8wz^2-z$, $Q_z=-8w^2z-w$, and $Q_w=-8wz^2+4w-z-1$.

\medskip
\noindent At $p_+$, one obtains $P=-2$, $Q=-4$, $P'=1/2$, and $Q'=-6$, so $h'(1)=-7/8\ne0$.  At $p_-$, one obtains $P=-2$, $Q=-2$, $P'=-3$, and $Q'=3$, so $h'(-1)=3\ne0$.  At the omitted point $p_0$, one has
$P=-4,\, Q=-2,\, P_z=6,\, P_w=9,\, Q_z=9,\,  Q_w=4.$
Since $w'=-2$ there, it follows that $P'=6+9(-2)=-12$ and $Q'=9+4(-2)=1$.  Therefore $h'(-1)=((-12)(-2)-(-4)(1))/4=7\ne0$.  Thus the logarithmic Gauss map is locally biholomorphic at every one of the three real critical points.  Since $f$ and $h$ have real coefficients, the inverse image of $\mathbb P^1_{\mathbb R}$ is locally the real branch through each point.  All three critical lifts are therefore regular and nondegenerate.

\subsection{Classification of the common critical value}

\noindent Along each real critical branch, use the real coordinate $z$ and write the critical-value map as $r(z)=(\log|z|,\log|w(z)|)$.  Its tangent vector is $r'(z)=(1/z,w'(z)/w(z))$.  At the three lifts this gives
$v_+=\Big(1,-\frac12\Big),\,
v_-=(-1,1),\, 
v_0=(-1,2).$
 None of these vectors vanishes, so none of the three image branches is ramified.  Their pairwise determinants are
$\di \det(v_+,v_-)=\frac12,\,
\det(v_+,v_0)=\frac32,\,
\det(v_-,v_0)=-1.$

\noindent Every determinant is nonzero.  The three contour branches are therefore smooth and pairwise transverse, with respective slopes $-1/2$, $-1$, and $-2$.  Since the complcete critical-fiber calculation proves that exactly these three branches pass through $(0,0)$, the origin is an ordinary transverse triple point of the contour.  It is not a node, because a node has exactly two smooth transverse branches.  It is not a cusp, since none of the three branches is ramified, and it is not a tacnode, since no two tangent lines coincide.

\begin{proposition}
Let $\Delta=[0,2]^2$.  This polygon is nontriangular and has the unique relative-interior lattice point $(1,1)$.  The maximally sparse polynomial $g(z,w)=1+z^2+w^2-2z^2w^2$ has Newton polygon $\Delta$.  Suppose, as established separately, that its amoeba contour has no finite singular point.  Adding the boundary monomials $z$ and $w$ and the interior monomial $zw$ gives the polynomial $f(z,w)=1+z^2+w^2-2z^2w^2-w+z-zw$, with the same Newton polygon.  The polynomial $f$ is smooth in $(\mathbb C^*)^2$.  Its amoeba contour has an ordinary transverse triple point at $(0,0)$, produced by the three distinct nondegenerate logarithmic critical points $(1,1)$, $(-1,1)$, and $(-1,-1)$.  The three contour branches have respective tangent slopes $-1/2$, $-1$, and $-2$.
\end{proposition}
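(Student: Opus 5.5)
The plan is to assemble the proposition from the computations of the preceding subsections, in the order the statement lists its claims. The nonsingularity of the contour of $g$ is taken from the subsection on the maximally sparse polynomial. There, the monomial change $(U,V)=(z^2,w^2)$ reduces the problem to $G=1+U+V-2UV$. The identity $\operatorname{Im}(U\overline D)=y(1+2|U|^2)$ forces the critical locus to be real. The parametrization $\rho$ is then immersive because its first component is $1/t$, and it is injective because the difference $(1+t)^2(1+2t)^2-(1-t)^2(1-2t)^2=12t(1+2t^2)$ vanishes only at $t=0$. The statement of the proposition allows us to cite this without repetition. The Newton polygon claims are immediate: the vertex coefficients $1,1,1,-2$ are nonzero, and the added exponents $(1,0),(0,1),(1,1)$ lie in $[0,2]^2$.

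Smoothness of $V_f$ follows from the explicit certificate $Af+Bf_z+Cf_w=1$. I would state that it is verified by direct expansion, since it is a finite polynomial identity with rational coefficients. A common zero of $f,f_z,f_w$ would contradict it.

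For the triple point, the key step is the complete determination of the fiber $\operatorname{Log}^{-1}(0,0)\cap V_f$. On the unit torus, $\overline z=z^{-1}$ and $\overline w=w^{-1}$. Hence any point of this fiber is a common root in $w$ of $f$ and of $z^2w^2\overline f$, a polynomial in which the coefficients of $f$ are replaced by the reversed polynomials. The resultant $7(z-1)^2(z+1)^4(z^2-z+1)$ restricts $z$ to finitely many values. For each value I would solve the quadratic in $w$ and keep only the unimodular roots, which gives $p_+,p_-,p_0$ and the conjugate pair at sixth roots of unity.

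I expect this step to be the main obstacle. The resultant has repeated factors, so for $z=\pm1$ one must check that the two equations share exactly the listed $w$. For $z=-1$ the leading coefficient $1-2z^2=-1$ and the quadratic becomes $-w^2+1=0$, giving both $w=\pm1$. This is why $z=-1$ carries two lifts, consistent with the multiplicity $4$.

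Next I would evaluate $H=\operatorname{Im}(P\overline Q)$ at the five points. At $p_+,p_-,p_0$ the Gauss ratios $[1:2],[1:1],[2:1]$ are real. At the nonreal pair $H=\mp21\sqrt3/2\neq0$, so these two points are not critical. Then I would carry out the nondegeneracy and tangent computations. First, $f_w\neq0$ at each of the three critical points, so $w=w(z)$ locally, and $h'$ equals $-7/8$, $3$, $7$ respectively. Since $f$ has real coefficients, $h$ is real on the real branch, and $h'\neq0$ makes it a local biholomorphism. The critical locus near each point is therefore the real branch. The tangent vectors are $v_+=(1,-1/2)$, $v_-=(-1,1)$, $v_0=(-1,2)$, all nonzero, and their pairwise determinants are $1/2$, $3/2$, $-1$, all nonzero.

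Finally, since the fiber computation shows that exactly three critical branches pass over $(0,0)$, the origin is a point where three immersed branches meet pairwise transversally, with slopes $-1/2$, $-1$, $-2$. By definition this is an ordinary transverse triple point.
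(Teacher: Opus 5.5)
Your proposal is correct and follows the paper's proof essentially step for step: the certificate $Af+Bf_z+Cf_w=1$ for smoothness, the unit-torus resultant $7(z-1)^2(z+1)^4(z^2-z+1)$ to determine the whole fiber over $(0,0)$, the values $H=\mp21\sqrt3/2$ to discard the nonreal pair, $h'\neq0$ at the three real lifts for nondegeneracy, and the nonzero pairwise determinants of $v_+,v_-,v_0$ for transversality. The data you quote check out, including the certificate identity, the resultant, and the three values of $h'$.
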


\bigskip

 
 \begin{figure}[ht]
\centering
\includegraphics[width=.25\textwidth]{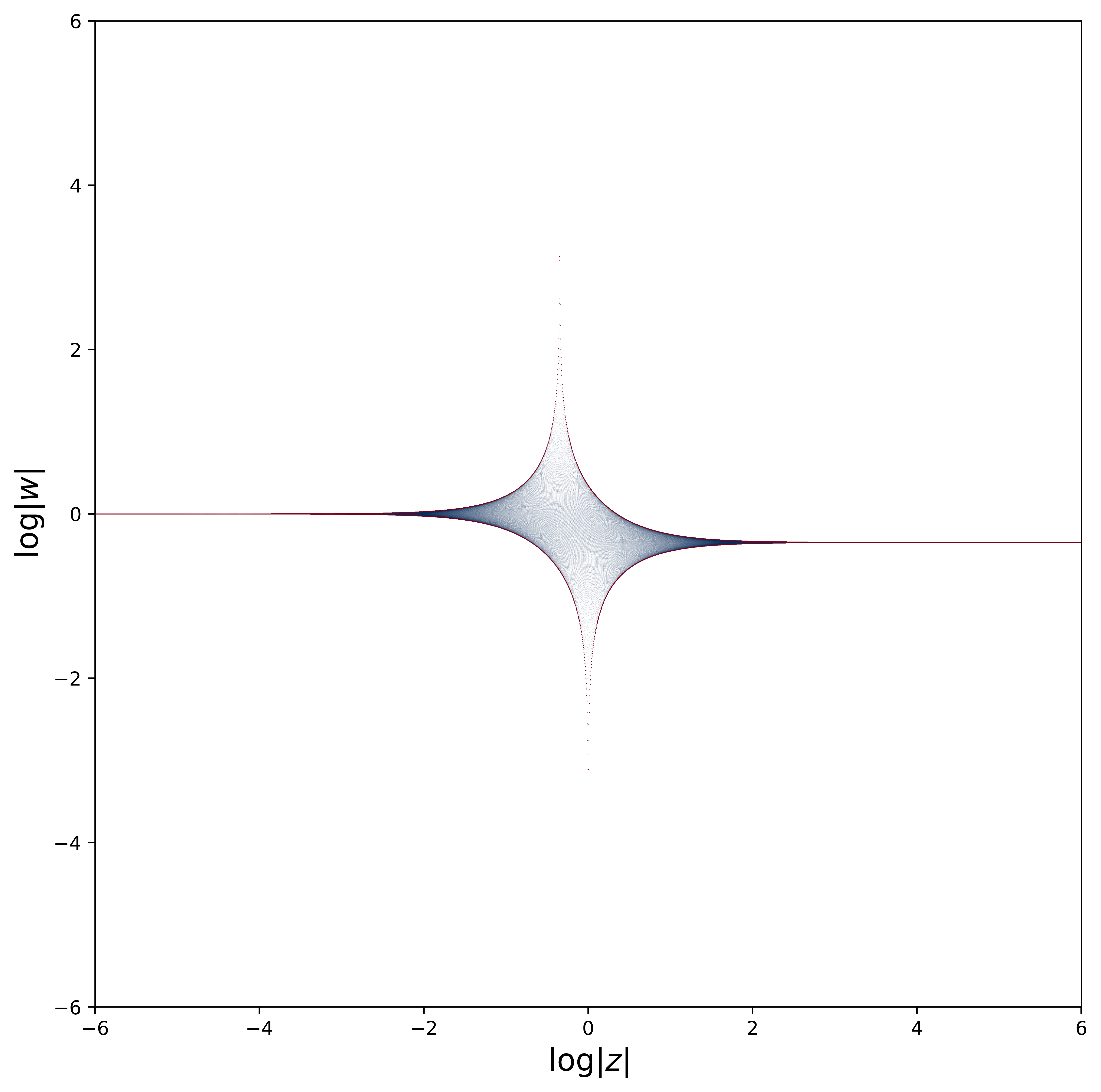} \qquad \includegraphics[width=.25\textwidth]{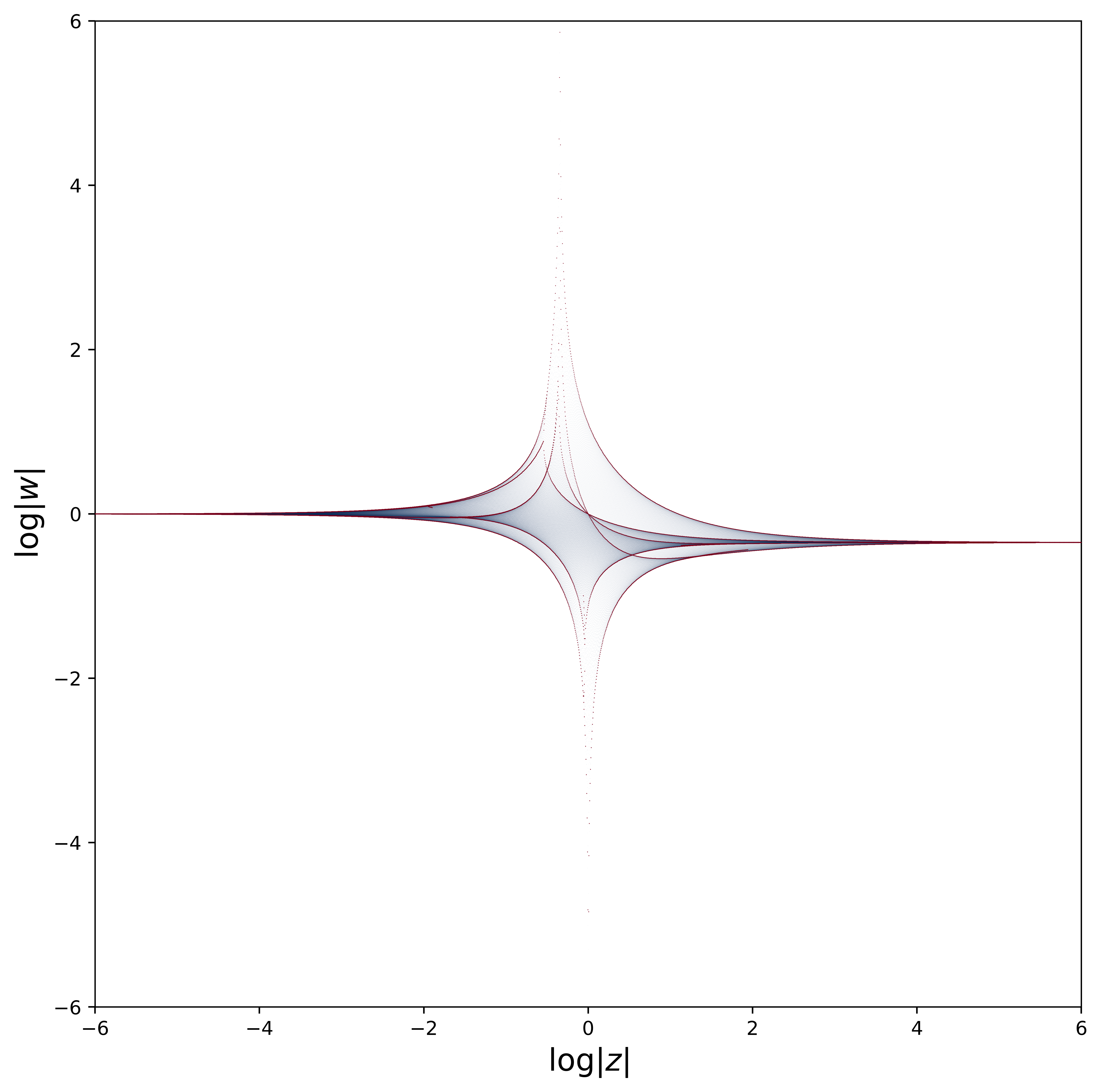}
\caption{Dark-color amoeba and clear red logarithmic critical values. In the left the solid  amoeba with its smooth contour of the polynomial $f(z,w)=1+z^2+w^2-2z^2w^2$, and the right represents the amoeba and its singular contour of the polynomial $f(z,w)=1+z^2+w^2-2z^2w^2-w+z-zw$.}
\end{figure}

 \begin{figure}[ht]
\centering
\includegraphics[width=.25\textwidth]{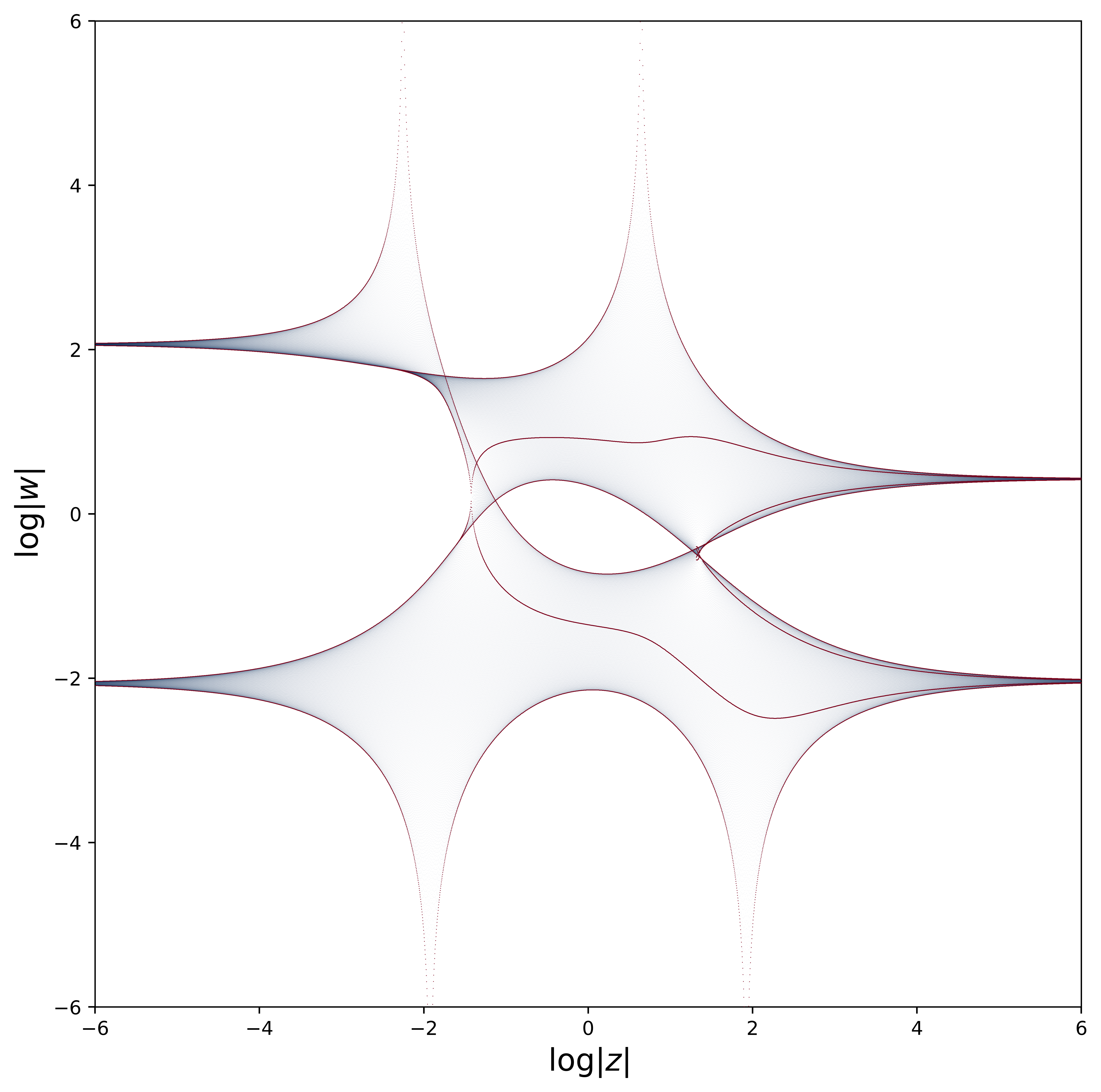}
 \qquad \includegraphics[width=.25\textwidth]{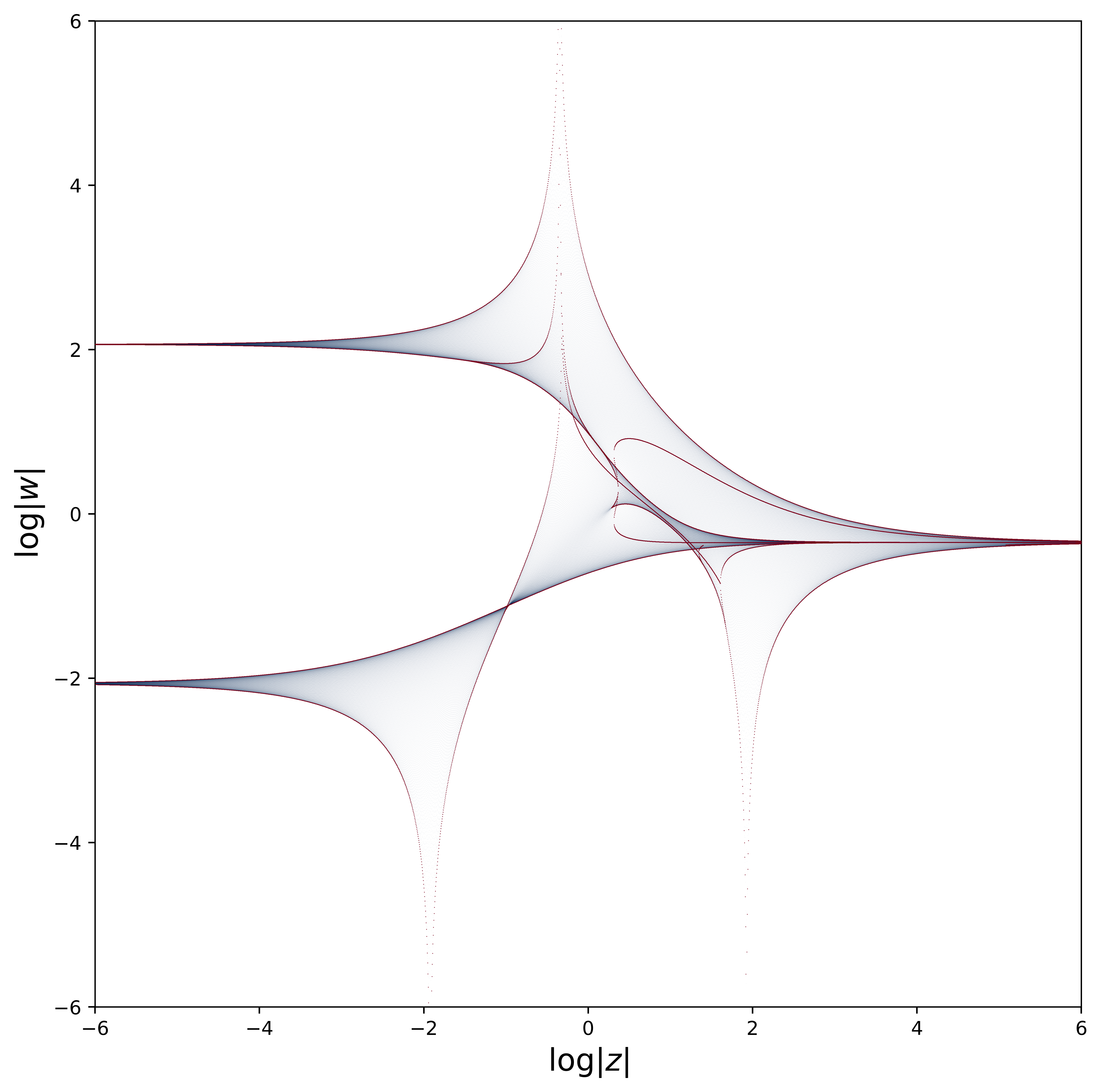}
\caption{Dark-color amoeba and clear red logarithmic critical values. In the left the solid  amoeba with its smooth contour of the polynomial $f(z,w)=1+z^2+w^2-5z^2w^2-8w+7z+27zw-9zw^2-7z^2w$, and the right represents the amoeba and its singular contour of the polynomial $f(z,w)=1+z^2+w^2-2z^2w^2-8w+7z-10zw$.}
\end{figure}


\section{A Bound for the Number of Cusps of an Amoeba Contour}
 
Let $\Delta\subset\mathbb R^2$ be a two-dimensional lattice polygon and let
$f\in\mathbb C[z^{\pm1},w^{\pm1}]$ have Newton polygon $\Delta$.  Write
$C_f=\{f=0\}\subset(\mathbb C^*)^2$, and denote its toric closure in
$X_\Delta$ by $\overline C_f$.  The critical locus  of the logarithmic map on 
 $C_f$ is denoted by $S_f$, and the contour is $\mathcal K_f = {\mathcal C}\mathscr{A}_f =\Log(S_f)$.

Let the set $\mathscr C_\Delta$ consisting of those $C_f$ for which $C_f$ is smooth,
$S_f$ is a smooth real one-dimensional manifold, and $\overline C_f$ is a
smooth torically nondegenerate curve meeting every toric boundary divisor
transversely and away from the zero-dimensional torus orbits.  Smoothness of
$S_f$ does not by itself imply that the restriction
$\Log|_{S_f}:S_f\to\mathbb R^2$ has only isolated ramification points.
Consequently, a finite numerical statement must concern isolated cusps, or
must include the additional hypothesis that this restriction has only
isolated ramification.  Let $\kappa(f)$ denote the number of distinct isolated
cusp values of $\mathcal K_f$.  Multiple cusp lifts with the same logarithmic
image are counted only once.  Counting lifts instead would give a number at
least as large, so every upper bound proved below also bounds $\kappa(f)$.

Introduce the normalized area, the number of interior lattice points, and the
number of boundary lattice points by
$n=\VolZ(\Delta)=2\Area(\Delta)$, $g=|\operatorname{int}(\Delta)\cap
\mathbb Z^2|$, and $b=|\partial\Delta\cap\mathbb Z^2|$.  The two coordinate
widths are
$\omega_z=\max_{(a,c)\in\Delta}c-\min_{(a,c)\in\Delta}c$ and
$\omega_w=\max_{(a,c)\in\Delta}a-\min_{(a,c)\in\Delta}a$.  Thus
$\omega_z$ is the degree of $z:\overline C_f\to\mathbb P^1$, while
$\omega_w$ is the degree of $w:\overline C_f\to\mathbb P^1$.

\begin{theorem}
For every $C_f\in\mathscr C_\Delta$ whose contour cusps are isolated, one has
$\kappa(f)\leq 2n(2n-b)+4n(\omega_z+\omega_w)$.  In particular,
$\kappa(f)\leq12n^2-2bn\leq12n^2-6n<12n^2=48\Area(\Delta)^2$.

If, on every irreducible component of the complexified logarithmic critical
curve introduced below, neither of the functions $z\zeta$ and $w\eta$ is
constant, then the sharper estimate
$\kappa(f)\leq2n(2n-b)+4n\min\{\omega_z,\omega_w\}$ holds.  It implies
$\kappa(f)\leq8n^2-2bn\leq8n^2-6n<8n^2=32\Area(\Delta)^2$.
\end{theorem}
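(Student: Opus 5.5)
I will not compute the cusp equations in ambient coordinates. Instead I will build an abstract compact Riemann surface carrying the contour map and count zeros of logarithmic differentials on it. This follows the strategy announced in the introduction.

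\emph{Step 1: the fiber product.} Let $\gamma=\gamma_f:\overline C_f\to\mathbb P^1$ be the logarithmic Gauss map. Let $\gamma^\dagger$ be the analogous map on the conjugate curve $\overline C_{f^\dagger}$, whose Newton polygon is the same $\Delta$. The equation $K=0$ says exactly $\gamma(z,w)=\gamma^\dagger(\zeta,\eta)$. Hence the closure of $\Gamma_f$ is the fiber product $\overline C_f\times_{\mathbb P^1}\overline C_{f^\dagger}$. Let $\widetilde\Gamma$ be its normalization. For $C_f\in\mathscr C_\Delta$, the Gauss map has degree $n$; this is Mikhalkin's count, $\deg\gamma=\VolZ(\Delta)$. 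The genus is $g$. By Riemann--Hurwitz, $\gamma$ has total ramification $2g-2+2n$. Pick's formula gives $n=2g-2+b$, so this total equals $2n+n-b+\dots$; explicitly $2g-2=n-b$, and the ramification is $R_\gamma=3n-b$.

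Now apply Riemann--Hurwitz to $\widetilde\Gamma\to\overline C_f$, a map of degree $n$ branched only over the ramification of $\gamma^\dagger$. This gives $\chi(\widetilde\Gamma)\ge n\chi(\overline C_f)-n R_{\gamma^\dagger}$, and hence
\[
2g(\widetilde\Gamma)-2\le n(n-b)+n(3n-b)=2n(2n-b).
\]
I expect the normalization to absorb the points where both factors ramify, so the inequality should only improve there. I will check this carefully, and it is where the transversality assumptions in $\mathscr C_\Delta$ are used.

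\emph{Step 2: degrees of $Z=z\zeta$ and $W=w\eta$.} The function $z$ on $\overline C_f$ has degree $\omega_z$ by the stated identification. Pulling back along the degree-$n$ map $\widetilde\Gamma\to\overline C_f$, its pole divisor has degree $n\omega_z$. The same holds for $\zeta$ from the other factor. Therefore $\deg(Z)_\infty\le 2n\omega_z$, $\deg(Z)_0\le 2n\omega_z$, and likewise for $W$ with $\omega_w$.

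\emph{Step 3: counting zeros of $d\log Z$.} A cusp of the contour is a ramification point of $\mu=(Z,W)$ restricted to the real form. Since $\log$ is a diffeomorphism, at such a point both $d\log Z$ and $d\log W$ vanish. On any component where $Z$ is nonconstant, $d\log Z$ is a nonzero meromorphic differential. Its poles are simple and lie at the zeros and poles of $Z$, so there are at most $4n\omega_z$ of them. Hence
\[
\#\text{zeros}(d\log Z)=2g(\widetilde\Gamma)-2+\#\text{poles}\le 2n(2n-b)+4n\omega_z,
\]
summing over components. Away from the zeros and poles of $Z$, every isolated cusp lift is among these zeros, and distinct cusp values have distinct lifts. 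Under the nonconstancy hypothesis, I choose whichever of $Z$ or $W$ gives the smaller bound, obtaining $2n(2n-b)+4n\min\{\omega_z,\omega_w\}$.

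In general, some component may have $Z$ constant; a cusp there is detected by $W$. I will then use the zeros of $d\log Z$ on the components where $Z$ is nonconstant and the zeros of $d\log W$ on the remaining components. Both are nonconstant nowhere simultaneously, because cusps are isolated. This gives the sum bound $2n(2n-b)+4n(\omega_z+\omega_w)$.

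\emph{Step 4: the numerical corollaries.} These follow from $\omega_z,\omega_w\le n$ and $b\ge3$. For the width bound I would justify $\omega\le n$ via lattice width $\le$ normalized area for two-dimensional polygons.

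The main obstacle is Step 1. It requires controlling the genus of the normalized fiber product, which may be reducible, including the behaviour over common branch points of $\gamma$ and $\gamma^\dagger$ and at the toric boundary. It also requires confirming that $\widetilde\Gamma$ really maps onto the closure of $\Gamma_f$ without extra components. I would handle this by working componentwise, using $\chi=\sum\chi_i$ and the fact that normalization can only increase $\chi$.
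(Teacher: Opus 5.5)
Your proposal is correct and follows the paper's proof essentially step for step. The shared steps are: the normalized fiber product of the two logarithmic Gauss maps; the Riemann--Hurwitz bound $\sum_j(2g_j-2)\le n(2g-2)+nR_\gamma=2n(2n-b)$ via Pick's formula; the estimates $\deg(Z)_0,\deg(Z)_\infty\le 2n\omega_z$ and likewise for $W$; the canonical-divisor count of zeros of one assigned differential $d\log Z$ or $d\log W$ per component; and finally $\omega_z,\omega_w\le n$ and $b\ge3$. The obstacle you flag in Step 1 is dispatched in the paper by the same observation you anticipate: over a pair of branch points with indices $e_p,e_q$ the normalization contributes ramification $e_q-\gcd(e_p,e_q)\le e_q-1$, so the total ramification of the first projection stays at most $nR_\gamma$.
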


\begin{proof}
The logarithmic Gauss map of $C_f$ is
$\gamma_f:\overline C_f\to\mathbb P^1$, with
$\gamma_f(z,w)=[zf_z:wf_w]$.  Toric nondegeneracy implies that its degree is
$\deg\gamma_f=n$.  The genus of $\overline C_f$ is $g$.  Riemann--Hurwitz
therefore gives the degree of the ramification divisor of $\gamma_f$ as
$R_\gamma=2g-2+2n$.

Let $\overline C_f^\sigma$ be the coefficient-conjugate curve, with
coordinates $(\zeta,\eta)$, and let $\gamma_f^\sigma$ be its logarithmic
Gauss map.  Consider the fiber product
$\Gamma=\overline C_f\mathbin{\times}_{\mathbb P^1}
\overline C_f^\sigma$, defined by
$f(z,w)=0$, $\overline f(\zeta,\eta)=0$, and
$\gamma_f(z,w)=\gamma_f^\sigma(\zeta,\eta)$.  Let
$\nu:\widetilde\Gamma\to\Gamma$ be its normalization.  The real fixed locus
specified by $(\zeta,\eta)=(\overline z,\overline w)$ maps onto $S_f$.
Thus every cusp lift is represented on $\widetilde\Gamma$.

Each projection from $\widetilde\Gamma$ to one of the two curve factors has
total degree $n$.  Write the connected components of
$\widetilde\Gamma$ as $D_j$, with genera $g_j$ and projection degrees $d_j$;
then $\sum_jd_j=n$.  Branching of the first projection can occur only above
branch values of the second logarithmic Gauss map.  Its total ramification is
therefore at most $nR_\gamma$.  Applying Riemann--Hurwitz component by
component gives
$\sum_j(2g_j-2)\leq n(2g-2)+n(2g-2+2n)=2n(2g-2+n)$.
This estimate remains valid when the fiber product is reducible; normalization
can only separate its branches.

On $\widetilde\Gamma$ define $Z=z\zeta$ and $W=w\eta$.  Along the real fixed
locus these satisfy $Z=|z|^2$, $W=|w|^2$, and hence
$\log Z=2\log|z|$, $\log W=2\log|w|$.  At a cusp lift the derivative of the
critical-value map vanishes.  Equivalently, both logarithmic differentials
$d\log Z=dZ/Z$ and $d\log W=dW/W$ vanish there.

The function $z$ has degree $\omega_z$ on each curve factor.  Since the two
projections have total degree $n$, the zero divisor and the pole divisor of
$Z=z\zeta$ each have degree at most $2n\omega_z$.  The differential
$d\log Z$ has at most a simple pole at every zero or pole of $Z$, so its pole
divisor has degree at most $4n\omega_z$.  The corresponding estimate for
$d\log W$ is $4n\omega_w$.

For a nonzero meromorphic differential $\alpha$ on a smooth compact curve
$D_j$, the canonical-divisor formula gives
$\deg\operatorname{Zeros}(\alpha)=2g_j-2+deg\operatorname{Poles}(\alpha)$.
On each $D_j$ at least one of $d\log Z$ and $d\log W$ is nonzero whenever the
critical-value map is not constant there.  Assign to $D_j$ one such
differential.  Every isolated cusp lift is a zero of the assigned
differential.  Summing the canonical-divisor formula and using both possible
coordinate differentials yields
$\kappa(f)\leq2n(2g-2+n)+4n(\omega_z+\omega_w)$.

Pick's formula says $n=2g+b-2$, and hence $2g-2=n-b$.  Substitution gives
$2n(2g-2+n)=2n(2n-b)$, proving the first displayed bound.  Each coordinate
width of a two-dimensional lattice polygon is at most its normalized area:
$\omega_z\leq n$ and $\omega_w\leq n$.  For example, after choosing a
lattice direction of the indicated width, a primitive transverse lattice
segment together with the extremal supporting lines gives a lattice triangle
of normalized area at least that width inside the corresponding strip; the
polygon has no smaller normalized area.  Since $b\geq3$, the area-only
inequalities follow.

Under the additional nonconstancy condition, the same coordinate
differential may be chosen on every component.  Choosing the coordinate with
smaller width replaces $4n(\omega_z+\omega_w)$ by
$4n\min\{\omega_z,\omega_w\}$.  The sharper inequalities then follow from
$\min\{\omega_z,\omega_w\}\leq n$ and $b\geq3$.
\end{proof}


\section{An Improved Degree Bound for Cusps of Amoeba Contours}
 
Let $f(z,w)$ define a smooth affine plane curve of projective degree $d$, and
let $\Delta$ be its Newton polygon.  After multiplication by a monomial if
necessary, the exponent vectors of $f$ lie in the standard degree triangle
$d\Delta_2=\operatorname{conv}\{(0,0),(d,0),(0,d)\}$.  Assume that the curve
belongs to the class $\mathscr C_\Delta$ described in the question and that
the ramification points of the critical-value map are isolated.  Denote by
$\kappa(f)$ the number of distinct cusp values of the contour of the amoeba.

The previously obtained polygonal estimate was
$\kappa(f)\leq 2n(2n-b)+4n(\omega_z+\omega_w)$, where
$n=\VolZ(\Delta)=2\Area(\Delta)$,
$b=|\partial\Delta\cap\mathbb Z^2|$, and $\omega_z,\omega_w$ are the two
coordinate lattice widths of $\Delta$.  The estimate
$\kappa(f)<12d^4$ followed by using $n\leq d^2$ and then replacing both
widths by $n$.  That replacement loses substantial information: a polygon
contained in $d\Delta_2$ satisfies $\omega_z\leq d$ and
$\omega_w\leq d$, not merely $\omega_z,\omega_w\leq d^2$.

\begin{theorem}
Under the hypotheses above, the constant $12$ can be replaced universally by
$13/2$.  More precisely,
$\kappa(f)\leq4d^4+8d^3-6d^2\leq\frac{13}{2}d^4$.
For $d\geq3$ one has the stronger uniform estimate
$\kappa(f)\leq6d^4$, and asymptotically one has
$\kappa(f)\leq(4+O(d^{-1}))d^4$.
\end{theorem}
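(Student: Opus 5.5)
The plan is to specialize the polygonal estimate of the preceding theorem, $\kappa(f)\leq 2n(2n-b)+4n(\omega_z+\omega_w)$, using the containment $\Delta\subset d\Delta_2$. Unlike the earlier cruder argument, the widths will be replaced by $d$, not by $n$.

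\emph{Step 1: the containment bounds.} Since $\Delta\subset d\Delta_2$, both coordinate projections of $\Delta$ lie in $[0,d]$, so $\omega_z\leq d$ and $\omega_w\leq d$. Monotonicity of area gives $n=\VolZ(\Delta)\leq\VolZ(d\Delta_2)=d^2$. Because $\Delta$ is a two-dimensional lattice polygon, $b\geq3$ and $n\geq1$.

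\emph{Step 2: reduce to a one-variable bound.} Substituting these bounds into the polygonal estimate gives
$$
\kappa(f)\leq 4n^2-2bn+8dn\leq n(4n+8d-6)=:\varphi(n).
$$
For $n\geq1$ both factors $n$ and $4n+8d-6$ are positive and increasing in $n$. Hence $\varphi$ is increasing on $[1,d^2]$, and its maximum there is $\varphi(d^2)=4d^4+8d^3-6d^2$. This proves the first inequality. The only point needing care is the order of operations: the negative term $-2bn$ must be handled through $b\geq3$ \emph{before} maximizing in $n$, and one must check that $\varphi$ really is monotone on the admissible range.

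\emph{Step 3: the numerical consequences.} The inequality $4d^4+8d^3-6d^2\leq\frac{13}{2}d^4$ is equivalent to $5d^2-16d+12\geq0$, that is, $(5d-6)(d-2)\geq0$. This holds for every positive integer $d$: at $d=1$ the left side is $1$, at $d=2$ it is $0$, and for $d\geq3$ both factors are positive. For the bound $6d^4$, the required inequality $8d^3-6d^2\leq2d^4$ reduces to $(d-1)(d-3)\geq0$, which holds for $d\geq3$. Finally, $4d^4+8d^3-6d^2=(4+8/d-6/d^2)d^4$ gives the asymptotic form $(4+O(d^{-1}))d^4$.

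No step is a real obstacle; the argument is elementary once the previous theorem is granted. The one place to be careful is confirming that $\omega_z,\omega_w\leq d$ is used together with $n\leq d^2$ without double-counting, and that the discrete check at $d=1,2$ covers the interval $(6/5,2)$ where the quadratic $5d^2-16d+12$ is negative. That interval contains no integers, so the universal constant $13/2$ holds for all $d$.
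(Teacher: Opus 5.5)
Your proposal is correct and follows the paper's proof. You substitute $n\le d^2$, $\omega_z+\omega_w\le 2d$ and $b\ge 3$ into the polygonal bound and then use that $x\mapsto 4x^2+(8d-6)x$ is increasing; the only cosmetic difference is the final numerics, where you factor $(5d-6)(d-2)\ge 0$ and $(d-1)(d-3)\ge 0$ whereas the paper shows $q(d)=4+8/d-6/d^2$ is maximal at $d=2$ using its derivative.
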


\begin{proof}
Expanding the polygonal bound gives
$\kappa(f)\leq4n^2-2bn+4n(\omega_z+\omega_w)$.  Since
$\Delta\subset d\Delta_2$, monotonicity of Euclidean area gives $n\leq d^2$.
Projection of $d\Delta_2$ onto either coordinate axis has length $d$;
therefore $\omega_z+\omega_w\leq2d$.  Every two-dimensional lattice polygon
has at least three boundary lattice points, so $b\geq3$.  Consequently,
$\kappa(f)\leq4n^2+(8d-6)n$.

For every fixed $d\geq1$, the function $x\mapsto4x^2+(8d-6)x$ is increasing
for $x\geq0$.  Inserting $n\leq d^2$ therefore yields
$\kappa(f)\leq4d^4+8d^3-6d^2$.  After division by $d^4$, the coefficient is
$q(d)=4+8/d-6/d^2$.  For integral $d\geq1$, its largest value occurs at
$d=2$, where $q(2)=13/2$.  Indeed, $q(1)=6$, and $q$ is decreasing for
$d\geq2$ because $q'(d)=(-8d+12)/d^3<0$ there.  This proves the universal
constant $13/2$.
For $d\geq3$, monotonicity gives $q(d)\leq q(3)=6$, proving
$\kappa(f)\leq6d^4$.  Finally, the exact expression
$q(d)=4+8/d-6/d^2$ tends to $4$, which proves the stated asymptotic estimate.
\end{proof}

The improvement is particularly clear for a full-support (i.e. dense) degree-$d$ polynomial. 
In that case $\Delta=d\Delta_2$, so $n=d^2$, $b=3d$, and
$\omega_z=\omega_w=d$.  Direct substitution into the polygonal estimate gives
$\kappa(f)\leq4d^4+2d^3$.  Thus, for the full degree triangle, the coefficient
is $4+2/d$ rather than $12$.  In particular, it is at most $5$ for $d\geq2$
and converges to $4$ as $d$ tends to infinity.

There is a further improvement when neither logarithmic coordinate function
is constant on any irreducible component of the normalized complexified
critical curve.  Under that additional condition, the sharper polygonal
estimate is
$\kappa(f)\leq2n(2n-b)+4n\min\{\omega_z,\omega_w\}$.  Using
$n\leq d^2$, $b\geq3$, and
$\min\{\omega_z,\omega_w\}\leq d$ gives
$\kappa(f)\leq4d^4+4d^3-6d^2$.  The coefficient
$4+4/d-6/d^2$ assumes its largest value for an integral $d\geq1$ at $d=3$.
Its value there is $14/3$, and hence
$\kappa(f)\leq\frac{14}{3}d^4$ under this additional condition.  When
$\Delta=d\Delta_2$, direct substitution gives the still sharper formula
$\kappa(f)\leq4d^4-2d^3$.

 
\section{A Newton-Polygon Bound for Multiple Nodes of Amoeba Contours}
 
Let $\Delta\subset\mathbb R^2$ be a two-dimensional lattice polygon.  Let
$f\in\mathbb C[z^{\pm1},w^{\pm1}]$ have Newton polygon $\Delta$, put
$C_f=\{f=0\}\subset(\mathbb C^*)^2$, and let $\overline C_f\subset X_\Delta$
be its toric compactification.  We interpret as before $\mathscr C_\Delta$ as the class
of curves for which $C_f$ is smooth, the logarithmic critical locus is a
smooth real curve, and $\overline C_f$ is torically nondegenerate.  In
particular, $\overline C_f$ is smooth, it avoids the zero-dimensional toric
orbits, and it meets the toric boundary divisors transversely.

\begin{definition}
Let $s\geq2$.  A point $x\in\mathcal K_f$ is an $s$-node if there are exactly
$s$ distinct points $p_1,\ldots,p_s\in S_f$ mapping to $x$, the restriction
$\Log|_{S_f}$ is immersive at every $p_i$, and the $s$ resulting tangent
lines to the contour at $x$ are pairwise distinct.  Thus every two of the
branches meet transversely.  Let $N_s(f)$ be the number of such points.  The
same estimate below applies to the number of points through which at least
$s$ transverse branches pass.
\end{definition}

\begin{theorem}
For every $C_f\in\mathscr C_\Delta$ and every $s\geq2$, the number of isolated
$s$-nodes of its amoeba contour satisfies
$N_s(f)\leq\left\lfloor 8n^2\omega_z\omega_w/(s(s-1))\right\rfloor$.
Therefore,
$N_s(f)\leq\left\lfloor8n^4/(s(s-1))\right\rfloor
=\left\lfloor128\Area(\Delta)^4/(s(s-1))\right\rfloor$.

If $f$ is a polynomial of projective degree $d$, so that after a monomial
translation $\Delta\subset d\Delta_2$, then
$N_s(f)\leq\left\lfloor8d^6/(s(s-1))\right\rfloor$.  These bounds count only
isolated transverse multiple points and remain valid after deleting
positive-dimensional overlap components of the two-lift coincidence scheme.
\end{theorem}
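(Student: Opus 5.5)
The proof will follow the two-lift strategy outlined in the introduction, carried out on the normalized fiber product $\widetilde\Gamma$ from the proof of the cusp theorem rather than on the ambient complexified critical curve. First I will recall that $\widetilde\Gamma$ is a smooth compact (possibly disconnected) curve carrying the meromorphic functions $Z=z\zeta$ and $W=w\eta$. As in the cusp proof, $z$ has degree $\omega_z$ on $\overline C_f$ and each projection $\widetilde\Gamma\to\overline C_f$ or $\widetilde\Gamma\to\overline C_f^\sigma$ has total degree $n$. Hence the pole divisor of $Z$ has degree at most $n\omega_z+n\omega_z=2n\omega_z$. The same argument gives $\deg W\leq 2n\omega_w$ on each component, summed over components. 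Every point of $S_f$ lifts to the real fixed locus of $\widetilde\Gamma$. Distinct points of $S_f$ give distinct points of $\widetilde\Gamma$, and $\Log$ on $S_f$ is $\frac12(\log Z,\log W)$ there.

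Next I will set up the coincidence scheme on $\widetilde\Gamma\times\widetilde\Gamma$. This is the locus where $Z(q_1)=Z(q_2)$ and $W(q_1)=W(q_2)$, with the diagonal and all positive-dimensional components removed. The latter include components $D_i\times D_j$ on which both functions coincide identically, as well as curve components of the overlap. To bound its isolated points I will use the map $\Phi=(Z,W):\widetilde\Gamma\to\mathbb P^1\times\mathbb P^1$. Off-diagonal coincidences are double points of the image curve $\Phi(\widetilde\Gamma)$, counted on the normalization. Two estimates are available here. The first is the genus formula on $\mathbb P^1\times\mathbb P^1$: an image of bidegree $(a,b)$ has at most $(a-1)(b-1)$ singularities, counted with $\delta$-invariants, and reducible unions add $a_ib_j+a_jb_i$ between components. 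The second is the intersection of the divisors $(Z\times 1)^*\Delta_{\mathbb P^1}$ and $(W\times1)^*\Delta_{\mathbb P^1}$ on the surface $\widetilde\Gamma\times\widetilde\Gamma$. Either way, the isolated off-diagonal pairs are bounded by $\deg Z\cdot\deg W$, up to a factor of $2$ for ordered versus unordered pairs. In the second approach, the class of $(Z\times1)^*\Delta$ is $\deg Z\,(F_1+F_2)$ modulo the diagonal contributions. Its intersection with the $W$-divisor has residual part at most $2\deg Z\deg W$ ordered pairs, i.e. $\deg Z\deg W\leq4n^2\omega_z\omega_w$ unordered pairs. I would make this precise using the Bézout/excess-intersection principle: isolated points of a residual scheme have total length bounded by the full intersection number when the removed components are effective.

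Finally, I will count pairs. An $s$-node has exactly $s$ distinct immersive lifts in $S_f$, giving $\binom s2$ unordered off-diagonal pairs of $\widetilde\Gamma$ with equal $(Z,W)$. The pairs from distinct nodes are disjoint, since their images differ. Transversality of the tangent lines shows each such pair is an isolated point of the coincidence scheme: two immersed branches with distinct tangents meet in an isolated point, so the point does not lie on a positive-dimensional overlap component. Hence $\binom s2 N_s(f)\leq 4n^2\omega_z\omega_w$, and since $N_s$ is an integer, taking floors gives the bound. The area-only version follows from $\omega_z,\omega_w\leq n$, proved in the cusp theorem. In the degree-$d$ case, $n\leq d^2$ and $\omega_z,\omega_w\leq d$ give $8d^6/(s(s-1))$. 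Points carrying at least $s$ transverse branches contribute at least $\binom s2$ pairs, so the same bound applies to them.

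The main obstacle is the residual intersection bound $\#\{\text{isolated off-diagonal pairs}\}\leq\deg Z\deg W$ unordered. The complication is that the diagonal and the overlap components appear in both divisors with multiplicity, and removing them must not increase the count. I would first try the image-curve route: compute $\delta$ of $\Phi_*[\widetilde\Gamma]$ via the adjunction formula in $\mathbb P^1\times\mathbb P^1$. Components where $\Phi$ is nonbirational onto its image, or where two components have the same image, are exactly the overlap components, and I would discard them before applying adjunction. Since each remaining node contributes $\delta\geq1$, the bound $\sum(a_i-1)(b_i-1)+\sum_{i<j}(a_ib_j+a_jb_i)\leq ab=\deg Z\deg W$ follows.
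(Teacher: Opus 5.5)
Your route is the paper's. You use the same normalized fiber product with $Z=z\zeta$, $W=w\eta$ and $\deg Z\le 2n\omega_z$, $\deg W\le 2n\omega_w$. You count off-diagonal pairs on $D\times D$ with the same two pulled-back diagonal divisors of classes $\delta_Z(F_1+F_2)$ and $\delta_W(F_1+F_2)$, and your alternative is the paper's Appendix C $\delta$-invariant/adjunction bound on the image curve. The $\binom{s}{2}$ bookkeeping is also the same. Your check that each pair of lifts of an $s$-node is an isolated coincidence is correct: real immersivity forces $d\Phi\neq0$, and distinct real tangent lines give distinct complex branches. The paper leaves this step implicit.

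Two points in the main count need tightening.
\begin{itemize}
\item The excess principle as you state it (effective removed components) is not true on a general surface; on $D\times D$ the diagonal has self-intersection $2-2g_D$. What works is that $A=(Z\times Z)^*\Delta_{\mathbb P^1}$ (not $(Z\times 1)^*$) is nef, being a pullback of an ample class. Write $B=B_0+C'$, with $C'$ the part of $B$ on common components. Then the isolated points have total multiplicity at most $A\cdot B_0=A\cdot B-A\cdot C'\le A\cdot B=2\delta_Z\delta_W$.
\item In the image-curve route, do not delete source components on which $\Phi$ is non-birational, since node lifts may lie on them. Pass to the reduced image $B$ instead, and use $ab\le\deg Z\deg W$ (not equality). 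Then count each $s$-node as a point of $B$ with at least $s$ distinct branches, so $\delta_x(B)\ge\binom{s}{2}$.
\end{itemize}

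The genuine gap is the area-only consequence. The inequality $\omega_z,\omega_w\le n$ that you import from the cusp theorem is false, and the paper's proof uses the same step. For example, $\Delta=\operatorname{conv}\{(0,0),(0,1),(1,k)\}$ has $n=1$ but $\omega_z=k$. Only the lattice width, the minimum over primitive directions, is bounded by $n$. So $N_s(f)\le\lfloor 8n^4/(s(s-1))\rfloor$ does not follow by substitution.

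It can be recovered. A monomial change of variables by a matrix in $GL_2(\Z)$ acts on the logarithmic plane by an invertible linear map. It therefore preserves the class $\mathscr C_\Delta$ (with $\Delta$ replaced by its image), $n$, and $N_s(f)$. Hence your first bound holds with $\omega_z\omega_w$ replaced by its minimum over unimodular images of $\Delta$. You must then prove separately that some unimodular image has both coordinate widths at most $n$, for instance by putting one axis along a lattice-width direction and shearing the other.

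The degree-$d$ bound is unaffected, since $\Delta\subset d\Delta_2$ does give $\omega_z,\omega_w\le d$ and $n\le d^2$.
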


\begin{proof}
The logarithmic Gauss map extends to a morphism
$\gamma_f:\overline C_f\to\mathbb P^1$ given on the torus by
$\gamma_f(z,w)=[zf_z:wf_w]$.  Toric nondegeneracy gives
$\deg\gamma_f=n$.  Let $\overline C_f^\sigma$ be the coefficient-conjugate
curve, with coordinates $(\zeta,\eta)$, and denote its logarithmic Gauss map
by $\gamma_f^\sigma$.  Form the fiber product
$\Gamma=\overline C_f\mathbin{\times}_{\mathbb P^1}\overline C_f^\sigma$.
In torus coordinates it is cut out by $f(z,w)=0$,
$\overline f(\zeta,\eta)=0$, and the equality
$\gamma_f(z,w)=\gamma_f^\sigma(\zeta,\eta)$.  Let
$\nu:D\to\Gamma$ be the normalization.  The curve $D$ is allowed to be
disconnected.

Complex conjugation defines an antiholomorphic involution on the fiber
product.  Its fixed locus is characterized by
$(\zeta,\eta)=(\overline z,\overline w)$, and its projection to the first
factor is precisely the logarithmic critical locus.  On $D$ consider the
meromorphic functions $Z=z\zeta$ and $W=w\eta$.  Along this fixed locus one
has $Z=|z|^2$ and $W=|w|^2$.  It follows that two critical points have the
same logarithmic image if and only if their corresponding real points of $D$
have equal $Z$-values and equal $W$-values.

Let $\delta_Z=\deg Z$ and $\delta_W=\deg W$, where for a disconnected curve
the degree means the sum of the degrees on its connected components.  Each
projection $D\to\overline C_f$ or $D\to\overline C_f^\sigma$ has total degree
$n$.  Since $z$ and $\zeta$ have degree $\omega_z$ on their respective curve
factors, the pullbacks of their pole divisors have degrees at most
$n\omega_z$ each.  The pole divisor of their product has degree at most the
sum, and therefore $\delta_Z\leq2n\omega_z$.  The identical argument for the
second coordinate gives $\delta_W\leq2n\omega_w$.

We now count pairs of distinct points of $D$ having the same image under
$\Phi=(Z,W):D\to\mathbb P^1\times\mathbb P^1$.  On $D\times D$, write the
two points as $q_1$ and $q_2$.  Equality of the first coordinates is the
divisor $Z(q_1)=Z(q_2)$, and equality of the second coordinates is the divisor
$W(q_1)=W(q_2)$.  If $D$ is connected, their divisor classes are respectively
$\delta_Z(F_1+F_2)$ and $\delta_W(F_1+F_2)$, where $F_1$ and $F_2$ are the
two fiber classes.  Since $F_1^2=F_2^2=0$ and $F_1F_2=1$, their intersection
number is $2\delta_Z\delta_W$.

Both equality divisors contain the diagonal $q_1=q_2$.  Removing the diagonal
and every other common curve component does not increase the bidegrees of the
remaining equations.  Bihomogeneous B\'ezout intersection on each product of
connected components of $D$ therefore shows that the isolated ordered
off-diagonal solutions are at most $2\delta_Z\delta_W$.  For completeness,
on one connected component of genus $g_D$, when no other common component is
present, subtracting the diagonal gives the more precise residual intersection
number
$2\delta_Z\delta_W-2\delta_Z-2\delta_W+2-2g_D$, which is no larger than
$2\delta_Z\delta_W$.  If $D$ has several components, applying the same
calculation to every product of two components and summing gives the same
upper bound in terms of the total degrees.
The involution $(q_1,q_2)\mapsto(q_2,q_1)$ acts freely on the off-diagonal
solutions.  Hence the number of isolated unordered pairs of distinct points
with equal $Z$- and $W$-coordinates is at most
$\delta_Z\delta_W\leq4n^2\omega_z\omega_w$ (for more details, see Appendix C).

An $s$-node has $s$ distinct critical lifts.  Every unordered pair among
these lifts is an off-diagonal coincidence, and distinct $s$-nodes supply
disjoint sets of pairs because their logarithmic images are different.  Each
$s$-node therefore consumes exactly $\binom{s}{2}=s(s-1)/2$ unordered pairs.
It follows that
$\binom{s}{2}N_s(f)\leq4n^2\omega_z\omega_w$, which is equivalent to the
first assertion of the theorem.

Every coordinate width of a two-dimensional lattice polygon is at most its
normalized area, so $\omega_z,\omega_w\leq n$.  Substitution gives
$N_s(f)\leq8n^4/(s(s-1))$.  Since $n=2\Area(\Delta)$, this is the same as
$N_s(f)\leq128\Area(\Delta)^4/(s(s-1))$.

If the projective degree is $d$, the containment
$\Delta\subset d\Delta_2$ gives $n\leq d^2$ and
$\omega_z,\omega_w\leq d$.  The polygonal bound then becomes
$N_s(f)\leq8d^6/(s(s-1))$.  Taking integer parts completes the proof.
\end{proof}

\begin{remark}
For $s=2$, the theorem gives $N_2(f)\leq4n^2\omega_z\omega_w$.  For $s=3$ it
gives $N_3(f)\leq(4/3)n^2\omega_z\omega_w$, and for $s=4$ it gives
$N_4(f)\leq(2/3)n^2\omega_z\omega_w$.  The factor $1/\binom{s}{2}$ is forced
by the combinatorics of a transverse intersection of $s$ branches: such a
point simultaneously accounts for one coincidence for each pair of branches.
\end{remark}


 \section{The Exact Bidegree of the Complexified Critical-Value Image}

Let $F(T,X,Y)$ be a homogeneous polynomial of degree $d$ and let
$C=\{F=0\}\subset\mathbb P^2$ be smooth and transverse to the three
coordinate lines.  Its affine torus part is defined by
$f(z,w)=F(1,z,w)$, where $z=X/T$ and $w=Y/T$.  Thus the Newton polygon is the
full triangle $\Delta=d\Delta_2$.  We assume throughout that the toric
compactification is nondegenerate.  Let $C^\sigma$ be the curve defined by
the polynomial obtained by conjugating the coefficients of $F$, and use
$(\zeta,\eta)$ for its affine torus coordinates.

The logarithmic Gauss map is
$\gamma:C\to\mathbb P^1$, with
$\gamma=[XF_X:YF_Y]$.  Its conjugate counterpart is denoted by
$\gamma^\sigma:C^\sigma\to\mathbb P^1$.  The complexified logarithmic
critical curve is the fiber product
$\Gamma=C\mathbin{\times}_{\mathbb P^1}C^\sigma$, and we write
$\nu:D\to\Gamma$ for its normalization.  The curve $D$ may have several
components.  On $D$ consider the rational map
$\Phi=(Z,W):D\to\mathbb P^1\times\mathbb P^1$, where
$Z=z\zeta$ and $W=w\eta$.  On the real fixed locus one has
$Z=|z|^2$ and $W=|w|^2$, so $\Phi$ is the algebraic complexification of the
critical-value parametrization before logarithms are taken.

For a curve in $\mathbb P^1\times\mathbb P^1$, we use coordinate bidegree:
the first entry is the degree of the first projection and the second entry is
the degree of the second projection.  For a reducible image with
multiplicities, the bidegree is understood componentwise and then added.

\begin{theorem}
The map $\gamma$ has degree $d^2$.  The two meromorphic functions on $D$ have
the exact total degrees
$\deg_D Z=2d^3$ and $\deg_D W=2d^3$.  Consequently, the pushforward cycle
$\Phi_*[D]$ has exact coordinate bidegree $(2d^3,2d^3)$.

If $D$ is irreducible and $\Phi$ has generic degree $e$ onto its reduced image
$B=\Phi(D)$, then $B$ has coordinate bidegree
$(2d^3/e,2d^3/e)$.  In particular, if $\Phi$ is birational onto $B$, then the
reduced image has exact bidegree $(2d^3,2d^3)$.
\end{theorem}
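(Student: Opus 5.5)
The plan is to compute everything from pole divisors, using the fiber-product description of $D$ and the fact that pullback of divisors multiplies degrees by the degree of the map.

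\textbf{Degree of $\gamma$.} Toric nondegeneracy gives $\deg\gamma=\VolZ(\Delta)$, as already used in the cusp and $s$-node theorems. For $\Delta=d\Delta_2$ this equals $d^2$.

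\textbf{The projections from $D$.} Each projection $\Gamma\to C$ and $\Gamma\to C^\sigma$ is the base change of a degree-$d^2$ map, so it is finite of degree $d^2$. The composites $p_1:D\to C$ and $p_2:D\to C^\sigma$ have total degree $d^2$, summed over the components of $D$. For any divisor $E$ on $C$, therefore $\deg p_1^*E=d^2\deg E$, and similarly for $p_2$.

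\textbf{Divisors of $z$ and $\zeta$.} On $C$ the function $z=X/T$ has pole divisor $C\cdot\{T=0\}$ and zero divisor $C\cdot\{X=0\}$. Each is reduced of degree $d$ by transversality to the coordinate lines. The same holds for $\zeta$ on $C^\sigma$.

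\textbf{Key step: no cancellation in $Z=z\zeta$.} The divisor of $Z$ on $D$ is $p_1^*\operatorname{div}(z)+p_2^*\operatorname{div}(\zeta)$. To conclude that its pole part is exactly $p_1^*(z)_\infty+p_2^*(\zeta)_\infty$, I must show that no point of $D$ lies over a pole of $z$ and a zero of $\zeta$, or vice versa. This is where the fiber-product condition $\gamma=\gamma^\sigma$ enters.
\begin{itemize}
\item On $\{T=0\}$, Euler's relation $TF_T+XF_X+YF_Y=dF=0$ gives $XF_X+YF_Y=0$, so $\gamma=[1:-1]$. The coordinates $X,Y$ do not vanish there, because $C$ avoids the coordinate vertices.
\item On $\{X=0\}$ one has $\gamma=[0:1]$, since $YF_Y\neq0$ there by transversality and smoothness.
\item On $\{Y=0\}$ one has $\gamma=[1:0]$.
\end{itemize}
The same values hold for $\gamma^\sigma$ on the corresponding lines. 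Since $[1:-1]\neq[0:1]$, a point of $\Gamma$ cannot lie over $\{T=0\}$ on one factor and over $\{X=0\}$ on the other. Hence the poles of $Z$ have degree exactly $d^2\cdot d+d^2\cdot d=2d^3$. The identical argument, using $[1:-1]\neq[1:0]$, gives $\deg_D W=2d^3$.

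I expect this step to be the main obstacle. It is carried out on $\Gamma$ and then pulled back to $D$. I must check that passing to the normalization does not change the local order of vanishing; this holds because orders of pulled-back functions on $D$ are computed branch by branch, and the divisors above are Cartier on $C$ and $C^\sigma$. I must also handle points of $\Gamma$ where both factors lie on toric boundary lines with the same $\gamma$-value. These are harmless, since both then contribute poles, or both zeros.

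\textbf{Bidegree of $\Phi_*[D]$.} The first coordinate degree of $\Phi_*[D]$ is the intersection with $\{\mathrm{pt}\}\times\mathbb P^1$. By the projection formula this equals $\deg Z^*(\mathrm{pt})=\deg_D Z$, and likewise for the second coordinate. This gives bidegree $(2d^3,2d^3)$. Nonconstancy on every component is automatic: each component of $D$ surjects onto $C$ and so meets $p_1^{-1}\{T=0\}$, hence $Z$ has a pole on it. If $D$ is irreducible with $\Phi$ of generic degree $e$ onto $B$, then $\Phi_*[D]=e[B]$. Dividing by $e$ gives bidegree $(2d^3/e,2d^3/e)$, and $e=1$ is the birational case.
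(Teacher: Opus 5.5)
Your proposal is correct and follows the paper's proof essentially step by step. You pull back the degree-$d$ pole divisors of $z$ and $\zeta$ along the two degree-$d^2$ projections, rule out zero--pole cancellation using the distinct boundary Gauss values $[0:1]$, $[1:0]$, $[1:-1]$ (the last via Euler's identity), and finish with the projection formula and $\Phi_*[D]=e[B]$. The only difference is that you quote $\deg\gamma=\VolZ(\Delta)$, whereas the paper derives $\deg\gamma=d^2$ directly as the degree of the base-point-free pencil spanned by $XF_X$ and $YF_Y$ in $\mathcal O_C(d)$.
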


\begin{proof}
The two homogeneous expressions $XF_X$ and $YF_Y$ are sections of
$\mathcal O_C(d)$.  They have no common zero on $C$.  Indeed, in the torus a
common zero would give $F=F_X=F_Y=0$, contradicting smoothness.  On the
coordinate boundary, transversality and toric nondegeneracy exclude a common
zero.  The logarithmic Gauss map is therefore a base-point-free pencil in
$\mathcal O_C(d)$.  Since $\deg\mathcal O_C(1)=d$, one obtains
$\deg\gamma=\deg\mathcal O_C(d)=d^2$.

Both projections $\pi_1:D\to C$ and $\pi_2:D\to C^\sigma$ consequently have
total degree $d^2$, with component multiplicities included.  On $C$, the
function $z=X/T$ has a zero divisor of degree $d$ on $X=0$ and a pole divisor
of degree $d$ on $T=0$.  Hence
$\pi_1^*\operatorname{Pole}(z)$ has degree $d^2d=d^3$.  The same calculation
on the second factor gives
$\deg\pi_2^*\operatorname{Pole}(\zeta)=d^3$.

It remains to verify that the sum of these pole divisors is not shortened by
cancellation with a zero from the other factor.  This is where the boundary
values of the logarithmic Gauss map enter.  On $X=0$, one has
$\gamma=[0:1]$.  On $Y=0$, one has $\gamma=[1:0]$.  On $T=0$, Euler's
identity $TF_T+XF_X+YF_Y=dF$ restricts on $C$ to
$XF_X+YF_Y=0$, so $\gamma=[1:-1]$.  These three values are pairwise distinct.

A zero of $z$ belongs to the fiber of $\gamma$ over $[0:1]$, whereas a pole
of $\zeta$ belongs to the fiber of $\gamma^\sigma$ over $[1:-1]$.  Such two
points cannot form a point of the fiber product, because points of the fiber
product have equal Gauss values.  The same reasoning excludes cancellation
between a pole of $z$ and a zero of $\zeta$.  Therefore
$\operatorname{Pole}(Z)=
\pi_1^*\operatorname{Pole}(z)+
\pi_2^*\operatorname{Pole}(\zeta)$, including multiplicities, and its degree
is exactly $2d^3$.  Thus $\deg_DZ=2d^3$.

Replacing $X$ by $Y$ gives the identical conclusion for $W$.  Its zeros lie
over the Gauss value $[1:0]$, while its poles lie over $[1:-1]$, so again
there is no zero--pole cancellation.  Hence $\deg_DW=2d^3$.

By the projection formula, the degree of the first coordinate on the
pushforward cycle $\Phi_*[D]$ equals $\deg_DZ$, and the degree of its second
coordinate equals $\deg_DW$.  This proves that its bidegree is
$(2d^3,2d^3)$.  If $D$ is irreducible and $\Phi$ has generic degree $e$, then
$\Phi_*[D]=e[B]$.  Dividing the two entries by $e$ gives the asserted
bidegree of the reduced image.
\end{proof}

This theorem also explains why an exact bidegree of the reduced image cannot
be specified from membership in $\mathscr C_\Delta$ alone.  Those hypotheses
do not require the fiber product to be irreducible and do not require
$\Phi$ to be birational onto its image.  What is determined without any such
extra condition is the bidegree of the pushforward cycle.  If
$D=\coprod_jD_j$, if $B_j=\Phi(D_j)$, and if $e_j$ is the generic degree of
$D_j\to B_j$, then the exact statement is
$\sum_je_j\deg(Z|_{B_j})=2d^3$ and
$\sum_je_j\deg(W|_{B_j})=2d^3$.

\begin{proposition}
Assume that $F$ has real coefficients.  Then the fiber product
$C\times_{\mathbb P^1}C$ contains the diagonal component $\Delta_C$.  Its
image under $\Phi$ has bidegree $(2d,2d)$.  The complementary off-diagonal
cycle therefore has exact bidegree
$(2d^3-2d,2d^3-2d)$.
\end{proposition}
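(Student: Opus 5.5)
Since $F$ has real coefficients, $C^\sigma=C$ and $\gamma^\sigma=\gamma$, so the fiber product is the self-fiber product $C\times_{\mathbb P^1}C$. It obviously contains the diagonal $\Delta_C=\{(p,p)\}$, because $\gamma(p)=\gamma(p)$. This disposes of the first assertion.

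To compute the bidegree of $\Phi(\Delta_C)$, I would identify $\Delta_C$ with $C$ via the first projection; since $C$ is smooth, this component is already normal. Restricted to the diagonal, $Z=z\zeta$ becomes $z^2$ and $W$ becomes $w^2$, so the question is the degree of $z^2$ and $w^2$ on $C$. The function $z=X/T$ has degree $d$ on $C$: its pole divisor is $C\cap\{T=0\}$, which has degree $d$ by B\'ezout and transversality. Hence $z^2$ has degree $2d$, and likewise $w^2$ has degree $2d$.

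In the convention of the preceding theorem, the pushforward $\Phi_*[\Delta_C]$ therefore has coordinate bidegree $(2d,2d)$. Strictly speaking this is the bidegree of the image counted with the generic degree of $\Phi|_{\Delta_C}$. If one wants the reduced image, one must account for that degree: $(z,w)\mapsto(z^2,w^2)$ is generically $4\!:\!1$ on $C$ only if $C$ is invariant under the sign changes, and otherwise it is birational. I would state the result as the cycle bidegree, consistent with the cycle-level formulation of the previous theorem, and note that it agrees with the reduced bidegree when $\Phi|_{\Delta_C}$ is birational.

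For the off-diagonal part I would decompose the normalization as $D=\Delta_C\sqcup D'$. This requires that the diagonal is a reduced component of the fiber product, with multiplicity one. That holds because $\gamma$ is generically unramified: away from the finitely many ramification points, the fiber product is locally a union of graphs, and the diagonal is one of them with multiplicity one. Additivity of pushforward cycles and of the degrees $\deg_D Z$ and $\deg_D W$ over components, together with the preceding theorem, then gives
$$\deg_{D'}Z=2d^3-2d,\qquad \deg_{D'}W=2d^3-2d.$$
So the complementary cycle has bidegree $(2d^3-2d,\,2d^3-2d)$.

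The main obstacle I expect is justifying that the diagonal enters with multiplicity exactly one and that the pole divisors add without cancellation on the diagonal component. The multiplicity issue I would settle by the local unramifiedness argument above, noting that normalization separates the diagonal from the other branches at the ramification points of $\gamma$. The cancellation issue is handled as in the previous proof: on the diagonal, a pole of $z$ and a zero of $\zeta$ would lie over the distinct Gauss values $[1:-1]$ and $[0:1]$, which is impossible at a single point.
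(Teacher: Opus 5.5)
Your proposal is correct and follows the paper's argument: on $\Delta_C$ one has $\Phi=(z^2,w^2)$, so the diagonal contributes cycle bidegree $(2d,2d)$, which is subtracted from the total $(2d^3,2d^3)$ of the preceding theorem. Your check that $\Delta_C$ enters with multiplicity one (because $\gamma$ is generically unramified) makes explicit a point the paper uses silently; the only slip is in a side remark, since $(z,w)\mapsto(z^2,w^2)$ on $C$ can also have generic degree $2$ (e.g.\ for $1+z^2+w^2+zw$, invariant only under $(z,w)\mapsto(-z,-w)$), so ``$4{:}1$ or birational'' is not a true dichotomy, though this does not affect the cycle-level statement you prove.
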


\begin{proof}
On the diagonal one has $(\zeta,\eta)=(z,w)$ algebraically, so
$\Phi|_{\Delta_C}=(z^2,w^2)$.  Since $z$ and $w$ each have degree $d$ on
$C$, their squares each have degree $2d$.  The diagonal image cycle thus has
bidegree $(2d,2d)$.  Subtraction from the total pushforward bidegree
$(2d^3,2d^3)$ leaves $(2d^3-2d,2d^3-2d)$.  This calculation is a cycle
calculation and remains valid if the residual fiber product is reducible.
\end{proof}

The diagonal is therefore a systematic component for real coefficients, but
it contributes only order $d$.  Removing it changes $2d^3$ into
$2d^3-2d$ and does not alter the leading cubic term.  The three toric boundary
divisors do not produce curve components in $D$, because each of them meets
$C$ in finitely many points.  They also produce no cancellation, as shown by
the three distinct Gauss values $[0:1]$, $[1:0]$, and $[1:-1]$.
There remains one possible mechanism by which the reduced image could have a
smaller bidegree: the map $\Phi$ could have generic degree $e>1$.  This is not
a cancellation in the divisors of $Z$ and $W$; it is a multiple-cover
phenomenon.  No lower bound growing with $d$ for such an $e$ follows from the
definition of $\mathscr C_\Delta$.  In particular, the class does not force
$e$ to be of order $d$, which would be needed to reduce a cubic coordinate
degree to a quadratic one.  Under the natural birationality hypothesis
$e=1$, the full cubic bidegree survives exactly.
The consequence for the node estimate is decisive.  A curve of coordinate
bidegree $(a,b)$ in $\mathbb P^1\times\mathbb P^1$ has arithmetic genus
$(a-1)(b-1)$ when it is reduced and irreducible.  With
$a=b=2d^3$, this scale is $(2d^3-1)^2=4d^6-4d^3+1$.  In the real-coefficient
off-diagonal calculation, the corresponding product is
$(2d^3-2d)^2=4d^6-8d^4+4d^2$.  Both expressions retain a leading term of
order $d^6$.


\section*{Appendix A: Smooth complete-intersection point of $\Gamma_f$'}

Let $f(z,w)$ be a Laurent polynomial and let
$f^\dagger(\zeta,\eta)$ denote its conjugate-coefficient copy, in which
$\zeta$ and $\eta$ are treated as independent complex variables. Put
\(
 P=zf_z,\qquad Q=wf_w,\,
 P^\dagger=\zeta f^\dagger_\zeta,\,
 Q^\dagger=\eta f^\dagger_\eta,
\)
and define
\(
 K=PQ^\dagger-QP^\dagger.
\)
The complexified logarithmic critical curve is
\[
 \Gamma_f=V(f,f^\dagger,K)
 \subset (\C^*)^4,
\]
where the coordinates of the ambient space are $(z,w,\zeta,\eta)$.

The ambient variety $(\C^*)^4$ is a smooth complex manifold of dimension
$4$. Since $\Gamma_f$ is defined by three equations, its expected complex
dimension is $4-3=1$. Thus one expects $\Gamma_f$ to be a complex curve.
Nevertheless, three equations do not automatically cut out a smooth curve:
their differentials may fail to be independent, or the common zero set may
have a component of dimension greater than one. 
Smooth
complete-intersection point of $\Gamma_f$, excludes precisely these
degeneracies.
More explicitly, let
\(
 p=(z_0,w_0,\zeta_0,\eta_0)\in\Gamma_f.
\)
The point $p$ is a smooth complete-intersection point of $\Gamma_f$ if the
three covectors
\(
 df_p,\, df^\dagger_p,\, dK_p
\)
are linearly independent in the cotangent space
$T_p^*((\C^*)^4)$. Equivalently, the Jacobian matrix
\[
 \mathcal J_{\Gamma_f}(p)=
 \begin{pmatrix}
 f_z & f_w & 0 & 0\\
 0 & 0 & f^\dagger_\zeta & f^\dagger_\eta\\
 K_z & K_w & K_\zeta & K_\eta
 \end{pmatrix}_{p}
\]
has rank $3$. By the holomorphic implicit-function theorem, this rank
condition implies that, in a neighbourhood of $p$, the common zero set of
$f$, $f^\dagger$, and $K$ is a smooth complex submanifold of codimension
$3$. Consequently,
\(
 \dim_{\C,p}\Gamma_f=4-3=1.
\)
This is the meaning of ``complete intersection'' in the sentence: locally at
$p$, the three displayed equations impose three independent conditions and
cut out a set of exactly the expected codimension. The smoothness at $p$ means
that this local one-dimensional set has no singularity at $p$.
The tangent space to a common zero set is obtained by differentiating its
defining equations. Hence
\(
 T_p\Gamma_f
 =\ker(df_p)\cap\ker(df^\dagger_p)\cap\ker(dK_p).
\)
Since the three differentials are independent in a four-dimensional
ambient space, this common kernel has complex dimension one. It is therefore
the tangent line to $\Gamma_f$ at $p$. 

If $\operatorname{rank}\mathcal J_{\Gamma_f}(p)<3$, then the common kernel has
dimension at least two and is only the Zariski tangent space defined by the
linearized equations. It need not be the tangent line of a smooth curve. Such
a point may be singular, may lie on several local branches, or may belong to
an excess-dimensional component. Therefore, the tangent-line argument used
later in the proof is justified only after restricting to the smooth
complete-intersection locus.
To see how this condition is used, consider the algebraic critical-value map
\(
 \mu:\Gamma_f\longrightarrow\C^2,
 \,
 \mu(z,w,\zeta,\eta)=(R,S)=(z\zeta,w\eta).
\)
On the real form $\zeta=\overline z$ and
$\eta=\overline w$, one has $R=|z|^2$ and $S=|w|^2$. Composing with
\[
 (R,S)\longmapsto
 \left(\frac12\log R,\frac12\log S\right)
\]
recovers the logarithmic contour map. Since the latter change of coordinates
is a real-analytic diffeomorphism on $\mathbb R_{>0}^2$, it does not change
the local ramification condition.

At a smooth complete-intersection point $p$, the restriction
$d\mu_p|_{T_p\Gamma_f}$ vanishes if and only if both $dR_p$ and $dS_p$
vanish on the one-dimensional tangent space. Since
\(
 T_p\Gamma_f
 =\ker(df_p)\cap\ker(df^\dagger_p)\cap\ker(dK_p),
\)
the vanishing of $dR_p$ on this tangent line is equivalent to the linear
dependence of
$df_p,df^\dagger_p,dK_p,dR_p$. It is therefore expressed by
\[
 J_R(p)=
 \det\!\begin{pmatrix}
 f_z&f_w&0&0\\
 0&0&f^\dagger_\zeta&f^\dagger_\eta\\
 K_z&K_w&K_\zeta&K_\eta\\
 R_z&R_w&R_\zeta&R_\eta
 \end{pmatrix}_{p}=0.
\]
Similarly, the vanishing of $dS_p$ on the tangent line is equivalent to
\[
 J_S(p)=
 \det\!\begin{pmatrix}
 f_z&f_w&0&0\\
 0&0&f^\dagger_\zeta&f^\dagger_\eta\\
 K_z&K_w&K_\zeta&K_\eta\\
 S_z&S_w&S_\zeta&S_\eta
 \end{pmatrix}_{p}=0.
\]
Consequently, on the smooth complete-intersection locus of $\Gamma_f$, the
ramification points of $\mu$ are exactly the common zeros of $J_R$ and $J_S$.
This determinant characterization would not by itself be sufficient at a
point where the first three rows have rank smaller than three, because there
the Zariski tangent space is larger than a line and the determinants may
vanish merely because $\Gamma_f$ is singular.


\section{Appendix B: The diagonal saturation and the ordered off-diagonal double-point scheme}

Let $f(z,w)$ be a Laurent polynomial and let
$f^\dagger(\zeta,\eta)$ be its conjugate-coefficient copy.  Put
$P=zf_z$, $Q=wf_w$,
$P^\dagger=\zeta f^\dagger_\zeta$, and
$Q^\dagger=\eta f^\dagger_\eta$.  The complexified logarithmic critical
curve is
$
\Gamma_f=V(f,f^\dagger,PQ^\dagger-QP^\dagger)\subset(\mathbb C^*)^4.
$
On this curve consider
$\mu(z,w,\zeta,\eta)=(z\zeta,w\eta)$.  On the real form
$(\zeta,\eta)=(\overline z,\overline w)$, this becomes
$(|z|^2,|w|^2)$, so two critical points have the same logarithmic value
exactly when they have the same $\mu$-value.
Take two independent points $p_1,p_2$ of $\Gamma_f$.  In the doubled Laurent
ring, the raw ideal is generated by the two copies of the equations defining
$\Gamma_f$ together with
$
z_1\zeta_1-z_2\zeta_2=0,
\, 
w_1\eta_1-w_2\eta_2=0.
$
Its zero scheme is the fibre product
$\Gamma_f\times_{(\mathbb C^*)^2}\Gamma_f$.  It contains all pairs of
distinct critical points with the same critical value, but it also contains
the entire diagonal $p_1=p_2$.
The diagonal ideal is
$
I_{\mathrm{diag}}=(z_1-z_2,w_1-w_2,
\zeta_1-\zeta_2,\eta_1-\eta_2).
$
The ordered off-diagonal ideal is defined by
$
I_{\mathrm{off}}
=I_{\mathrm{raw}}:I_{\mathrm{diag}}^\infty.
$
The geometric meaning of saturation is
$V(I:J^\infty)=\overline{V(I)\setminus V(J)}$.  Hence
$V(I_{\mathrm{off}})$ is the scheme-theoretic closure of the locus of pairs
$(p_1,p_2)$ satisfying $\mu(p_1)=\mu(p_2)$ and $p_1\neq p_2$.  Saturation
removes components supported on the diagonal, including embedded components,
but the closure of the off-diagonal locus may still meet the diagonal at
limiting ramification points.
If the equations are homogenized in the Cox ring of a toric compactification,
two further operations must be distinguished.  Saturation by the irrelevant
ideal $B_\Sigma$ makes the homogeneous ideal define the correct toric
subscheme, whereas saturation by the boundary monomial
$x_\partial=\prod_{\rho\in\Sigma(1)}x_\rho$ removes components supported
entirely on the toric boundary.  A compactified version is therefore
$
I_{\mathrm{off}}^{\mathrm{tor}}
=\left(\left(
I_{\mathrm{raw}}^h:(I_{\mathrm{diag}}^h)^\infty
\right):B_\Sigma^\infty\right):x_\partial^\infty.
$
Working directly in the Laurent ring avoids the irrelevant Cox locus and
restricts the computation to the dense torus.
The involution $\tau(p_1,p_2)=(p_2,p_1)$ preserves the off-diagonal scheme.
If every coincident value has exactly two distinct reduced preimages, then it
contributes the two ordered pairs $(p_1,p_2)$ and $(p_2,p_1)$; the number of
unordered double values is therefore half the length of the ordered scheme.
This division requires reducedness, a free involution, and the absence of
triple or higher fibres.  A fibre with three distinct preimages contributes
six ordered pairs, and nontransverse intersections must be counted with their
scheme multiplicities.
Thus $I_{\mathrm{off}}$ is the appropriate algebraic object for recording
ordered distinct critical preimages with the same logarithmic critical value.


\section{Appendix C: Unordered-Pair Bound}

Let $D$ be the normalization of the complexified logarithmic critical curve,
and let
$$
\Phi=(Z,W):D\longrightarrow\mathbb P^1\times\mathbb P^1
$$
be the critical-value map, where $Z=z\zeta$ and $W=w\eta$.  The curve $D$ is
allowed to be disconnected.  Write $\delta_Z=\deg_D Z$ and
$\delta_W=\deg_D W$, where the degree on a disconnected curve is the sum of
the degrees on its connected components.
An off-diagonal coincidence is a pair $(q_1,q_2)\in D\times D$ satisfying
$q_1\neq q_2$, $Z(q_1)=Z(q_2)$, and $W(q_1)=W(q_2)$.  We count unordered
pairs, so $(q_1,q_2)$ and $(q_2,q_1)$ represent the same coincidence.  The
claim to be proved is that the number of isolated unordered coincidences is at
most $\delta_Z\delta_W$.  Combining this with
$\delta_Z\leq2n\omega_z$ and $\delta_W\leq2n\omega_w$ gives
$\delta_Z\delta_W\leq4n^2\omega_z\omega_w$.

\begin{proposition}
Let $D$ be a smooth compact curve, possibly disconnected, and let
$\Phi=(Z,W):D\to\mathbb P^1\times\mathbb P^1$ be a morphism.  After removing
the diagonal and every positive-dimensional component of
$D\times_{\Phi(D)}D$, let $M$ be the number of isolated unordered pairs of
distinct points having the same image.  Then
$M\leq\delta_Z\delta_W$.
\end{proposition}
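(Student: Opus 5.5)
The plan is to run the intersection count on the surface $D\times D$ and obtain the bound from a refined Bézout inequality. Write $D=\coprod_iD_i$ with $\delta_Z^i=\deg Z|_{D_i}$ and $\delta_W^i=\deg W|_{D_i}$, so that $\sum_i\delta_Z^i=\delta_Z$ and $\sum_i\delta_W^i=\delta_W$. Let $\Delta_{\mathbb P^1}\subset\mathbb P^1\times\mathbb P^1$ be the diagonal, a divisor of class $\mathcal O(1,1)$. Put
$$E_Z=(Z\times Z)^*\Delta_{\mathbb P^1},\qquad E_W=(W\times W)^*\Delta_{\mathbb P^1}.$$
These are effective divisors on $D\times D$ whose common zero set is exactly the coincidence locus $\{Z(q_1)=Z(q_2),\ W(q_1)=W(q_2)\}$.

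On each product $D_i\times D_j$, the class of $E_Z$ is $\delta_Z^iF_1+\delta_Z^jF_2$, where $F_1,F_2$ are the two fibre classes. Since $F_1^2=F_2^2=0$ and $F_1F_2=1$, we get
$$E_Z\cdot E_W=\delta_Z^i\delta_W^j+\delta_Z^j\delta_W^i$$
on that product. Summing over all ordered pairs $(i,j)$ gives total intersection number $2\delta_Z\delta_W$. This is the same computation as in the proof of the $s$-node theorem, now carried out systematically over all component pairs.

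The key step is to pass from the intersection number to the number of isolated points, even though $E_Z$ and $E_W$ share curve components: the diagonal of $D\times D$, and possibly further components of $D\times_{\Phi(D)}D$. For this I would use Fulton's refined intersection product. The intersection class $E_Z\cdot E_W$ decomposes as a sum of contributions of the distinguished varieties of $E_Z\cap E_W$. The isolated points are distinguished varieties, and each contributes its intersection multiplicity, which is at least $1$. The remaining contributions come from the common curve components $C$. Each is of the form $\bigl(c_1(\mathcal O(E_Z))\,c_1(\mathcal O(E_W))\cap s(C,D\times D)\bigr)_0$, up to lower-order corrections. Both $\mathcal O(E_Z)$ and $\mathcal O(E_W)$ are pullbacks of the globally generated bundle $\mathcal O(1,1)$, hence globally generated. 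Fulton's positivity result (Example 12.2.8 / Theorem 12.2) then says that, for intersections of divisors of globally generated line bundles, every distinguished variety contributes nonnegatively. Therefore
$$\#\{\text{isolated points of }E_Z\cap E_W\}\leq 2\delta_Z\delta_W.$$
The residual formula quoted for a single component (genus correction after subtracting the diagonal) serves only as a consistency check.

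Finally, no isolated point lies on the diagonal, since the diagonal has been removed as a positive-dimensional component. The involution $(q_1,q_2)\mapsto(q_2,q_1)$ preserves $E_Z$, $E_W$, and the set of isolated points, and it acts freely off the diagonal. Each unordered pair therefore accounts for two isolated ordered points, which gives $2M\leq2\delta_Z\delta_W$ and hence $M\leq\delta_Z\delta_W$.

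The main obstacle is the nonnegativity of the excess contributions along the removed curve components. A naive B\'ezout count is not valid once components are shared, so everything rests on the global generation of $\mathcal O(E_Z)$ and $\mathcal O(E_W)$. If citing Fulton is felt to be too heavy, I would fall back on a moving argument: replace $\Delta_{\mathbb P^1}$ by a general translate under $\mathrm{PGL}_2$ in one factor. The coincidence locus then deforms to a transverse intersection of $2\delta_Z\delta_W$ points, and by upper semicontinuity each isolated point of the original intersection is a limit of at least one of these points, with distinct isolated points receiving distinct limits.
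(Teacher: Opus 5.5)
Your argument is correct, but it takes a different route from the paper.

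\textbf{The paper's route.} It never works on $D\times D$. It passes to the reduced image $B=\Phi(D)\subset\mathbb P^1\times\mathbb P^1$ and bounds $M$ by the total delta invariant of $B$, using $\delta_x(B)\geq\binom{r_x}{2}$ from the branch formula. It then writes $\sum_x\delta_x(B)=p_a(B)-\sum_i g_i+c-1$, with $p_a(B)=(a-1)(b-1)$ by adjunction. Using $c\leq a+b$ it reaches $M\leq ab$, and it finishes with $a\leq\delta_Z$, $b\leq\delta_W$.

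\textbf{Your route.} You stay on the source self-product and compute $E_Z\cdot E_W=2\delta_Z\delta_W$. You discard the excess contributions of the diagonal and other common curves by Fulton's positivity theorem, or by the Kleiman-type moving argument. Concretely, Fulton's Theorem 12.2(a) is applied on each irreducible $D_i\times D_j$ to the zero scheme of the section $(s_Z,s_W)$ of the globally generated bundle $(Z\times Z)^*\mathcal O(1,1)\oplus(W\times W)^*\mathcal O(1,1)$. An isolated point is then a distinguished variety of the expected dimension $0$ and contributes its multiplicity $\geq1$. Finally you halve using the free involution.

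\textbf{What each buys.} The paper's argument uses only classical plane-curve tools. It gives the sharper intermediate bound $M\leq ab$ in terms of the reduced image, with the genus corrections visible.

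However, its reduction to the normalization $\widetilde B$ is lossy when some component of $D$ covers its image with degree $e>1$, or when several components of $D$ share an image component. For example, take $D$ a double cover of $\widetilde B$, unramified over the two points above a node of $B$. This gives four isolated unordered pairs over a single unit of $\delta$. So the step $M\leq\sum_x\delta_x(B)$ would need a weighted version; the final inequality still holds because $\delta_Z$ and $\delta_W$ carry the covering degrees.

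Your source-side count handles multiple covers, shared image components and disconnected $D$ uniformly. The price is refined intersection theory (or Kleiman transversality plus conservation of number), and it only gives $2\delta_Z\delta_W$ for ordered pairs.

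\textbf{One point to tidy.} Suppose $Z$ takes the same constant value on $D_i$ and $D_j$. Then $(Z\times Z)^*\Delta_{\mathbb P^1}$ is not a divisor on $D_i\times D_j$. Phrase everything through the section of the line bundle, which is trivial there with class $0$, and note that such a product contains no isolated coincidences.
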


\begin{proof}
Let $B=\Phi(D)_{\mathrm{red}}$ be the reduced image curve.  Components of $D$
that map with degree larger than one onto an image component produce
positive-dimensional off-diagonal correspondences.  After those
correspondences have been removed, every isolated coincidence is represented
on the normalization of $B$.  Thus it is enough to work with the normalization
map $\nu:\widetilde B\to B$, where $\widetilde B$ is the disjoint union of the
normalizations of the irreducible components of $B$.
Let $x\in B$ be a singular point and suppose that its inverse image under
$\nu$ consists of $r_x$ distinct points.  These points represent the
$r_x$ analytic branches of $B$ through $x$.  They produce
$\binom{r_x}{2}$ unordered pairs of distinct normalization points having the
same image.
Let $\delta_x(B)$ be the local delta invariant.  Algebraically it is
$\delta_x(B)=\dim_{\mathbb C}
(\nu_*\mathcal O_{\widetilde B,x}/\mathcal O_{B,x})$.  If the branches at
$x$ are $B_1,\ldots,B_{r_x}$, the branch formula for the delta invariant is
$$
\delta_x(B)=\sum_{i=1}^{r_x}\delta_x(B_i)
+\sum_{1\leq i<j\leq r_x}I_x(B_i,B_j),
$$
where $I_x(B_i,B_j)$ is the local intersection multiplicity of two distinct
branches.  Every branch delta invariant is nonnegative, and every
$I_x(B_i,B_j)$ is at least one.  Therefore
$\delta_x(B)\geq\binom{r_x}{2}$.  Equality holds when all branches are smooth
and every pair meets transversely, as at an ordinary multiple point.
Summing over the singular points of $B$ shows that the total number of
isolated unordered pairs is bounded by the total delta invariant:
$M\leq\sum_{x\in\operatorname{Sing}(B)}\delta_x(B)$.

Let $B_1,\ldots,B_c$ be the irreducible components of $B$, and let $g_i$ be
the genus of the normalization of $B_i$.  The normalization exact sequence
gives
$\di
\sum_{x\in\operatorname{Sing}(B)}\delta_x(B)
=p_a(B)-\sum_{i=1}^c g_i+c-1.
$
Let $(a,b)$ be the coordinate bidegree of $B$, meaning that $a$ is the total
degree of the first projection and $b$ is the total degree of the second
projection.  Adjunction on $\mathbb P^1\times\mathbb P^1$ gives
$p_a(B)=(a-1)(b-1)$.  Since every $g_i$ is nonnegative,
$\di
\sum_x\delta_x(B)\leq(a-1)(b-1)+c-1.
$
Every irreducible curve component of $B$ has positive degree under at least
one of the two coordinate projections.  If its coordinate bidegree is
$(a_i,b_i)$, then $a_i+b_i\geq1$.  Summing over the components gives
$c\leq\sum_i(a_i+b_i)=a+b$.  It follows that
$\di
\sum_x\delta_x(B)
\leq(a-1)(b-1)+a+b-1=ab.
$
Therefore, $M\leq ab$.
It remains to compare the bidegree of the reduced image with the degrees on
$D$.  The degree $a$ of the first projection of $B$ is no larger than the
degree of $Z$ on $D$, and similarly $b\leq\delta_W$.  Indeed, on each source
component the degree of a coordinate function equals the generic covering
degree onto the image component multiplied by the corresponding coordinate
degree of that image component.  Summation gives $a\leq\delta_Z$ and
$b\leq\delta_W$.  Hence
$M\leq ab\leq\delta_Z\delta_W$, as required.
\end{proof}
We now specialize this statement to the complexified logarithmic critical
curve.  Let $C_f$ be the torically compactified curve and let
$\gamma_f:C_f\to\mathbb P^1$ be its logarithmic Gauss map.  Put
$n=\deg\gamma_f=2\operatorname{Area}(\Delta)$.  The two projections from $D$
to $C_f$ and to its coefficient-conjugate curve have total degree $n$.
The meromorphic function $z$ has degree $\omega_z$ on $C_f$, and $\zeta$ has
the same degree on the conjugate curve.  The pole divisor of
$Z=z\zeta$ is bounded by the sum of the pullbacks of the pole divisors of
$z$ and $\zeta$.  Each pullback has degree $n\omega_z$.  Thus
$\delta_Z\leq2n\omega_z$.  Boundary zero--pole cancellation can make this
inequality strict, but it cannot increase the degree.  The identical argument
gives $\delta_W\leq2n\omega_w$.
Combining the proposition with these two estimates gives
$
M\leq\delta_Z\delta_W
\leq(2n\omega_z)(2n\omega_w)
=4n^2\omega_z\omega_w.
$
This is the claimed inequality.

\begin{remark}
At a transverse $s$-fold contour point, the $s$ distinct critical lifts give
exactly $\binom{s}{2}$ unordered pairs.  Since different contour values give
disjoint collections of pairs, the same proof yields
$N_s(f)\binom{s}{2}\leq4n^2\omega_z\omega_w$, and hence
$N_s(f)\leq8n^2\omega_z\omega_w/(s(s-1))$.
\end{remark}

 \end{document}